\theoremstyle{definition}
\theoremstyle{remark}
\begin{document}

\title{The Cauchy-Schl\"Omilch transformation}

\author[T. Amdeberhan]{T. Amdeberhan}
\address{Department of Mathematics,
Tulane University, New Orleans, LA 70118}
\email{tamdeberhan@math.tulane.edu}

\author[M. L. Glasser]{M. L. Glasser}
\address{Department of Mathematics and Computer Science, 
\newline
Clarkson University, Postdam, 
New York 13676}
\email{laryg@clarkson.math.edu}

\author[M. C. Jones]{M. C. Jones}
\address{Department of Mathematics and Statistics,
The Open University, Milton Keynes, United Kingdom}
\email{m.c.jones@open.ac.uk}

\author[V. Moll]{V. H. Moll}
\address{Department of Mathematics,
Tulane University, New Orleans, LA 70118}
\email{vhm@math.tulane.edu}

\author[R. Posey]{R. Posey}
\address{Department of Mathematics,
Baton Rouge Community College, 
\newline
Baton Rouge, LA 70806}
\email{rposey@gmail.com}

\author[D. Varela]{D. Varela}
\address{DVD Academic Consulting}
\email{xentrico@yahoo.com}

\subjclass{Primary 33}

\date{\today}

\keywords{}

\begin{abstract}
The Cauchy-Schl\"omilch transformation states that for a function $f$ and 
$a, \, b > 0$, the 
integral of $f(x^{2})$ and $af((ax-bx^{-1})^{2}$ over the interval
$[0, \infty)$ are the same.   This 
elementary result is used to evaluate 
many non-elementary definite integrals, most of which cannot be obtained 
by symbolic packages. Applications to probability distributions is also 
given. 
\end{abstract}

\maketitle

\newcommand{\nn}{\nonumber}
\newcommand{\ba}{\begin{eqnarray}}
\newcommand{\ea}{\end{eqnarray}}
\newcommand{\dz}{\frac{d}{dz}}
\newcommand{\E}{{\mathfrak{E}}}
\newcommand{\F}{{\mathfrak{F}}}
\newcommand{\Ro}{{\mathfrak{R}}}
\newcommand{\ift}{\int_{0}^{\infty}}
\newcommand{\iftt}{\int_{- \infty}^{\infty}}
\newcommand{\no}{\noindent}
\newcommand{\victor}{{}}
\newcommand{\X}{{\mathbb{X}}}
\newcommand{\Q}{{\mathbb{Q}}}
\newcommand{\R}{{\mathbb{R}}}
\newcommand{\Y}{{\mathbb{Y}}}
\newcommand{\Ftwo}{{{_{2}F_{1}}}}
\newcommand{\realpart}{\mathop{\rm Re}\nolimits}
\newcommand{\imagpart}{\mathop{\rm Im}\nolimits}

\newtheorem{Definition}{\bf Definition}[section]
\newtheorem{Thm}[Definition]{\bf Theorem} 
\newtheorem{Example}[Definition]{\bf Example} 
\newtheorem{Lem}[Definition]{\bf Lemma} 
\newtheorem{Note}[Definition]{\bf Note} 
\newtheorem{Cor}[Definition]{\bf Corollary} 
\newtheorem{Prop}[Definition]{\bf Proposition} 
\newtheorem{Problem}[Definition]{\bf Problem} 
\numberwithin{equation}{section}

\section{Introduction} \label{intro} 
\setcounter{equation}{0}

The problem of analytic evaluations of definite integrals has been of 
interest to scientists for a long time. The central 
question can be stated as follows: \\

\begin{center}
{\em given a class of 
functions} $\mathfrak{F}$ {\em and an 
interval} $[a,b] \subset \mathbb{R}$, {\em express the integral of} $f \in 
\mathfrak{F}$ 
\begin{equation}
I = \int_{a}^{b} f(x) \, dx,
\nonumber
\end{equation}
\noindent
{\em in terms of special values of functions in an enlarged class}
$\mathfrak{G}$. 
\end{center}

\medskip

Many methods for the evaluation of definite integrals have been  developed since
the early stages of Integral Calculus, which 
resulted in a variety of ad-hoc
techniques for producing closed-form expressions. 
Although a general procedure 
applicable to all integrals is undoubtedly unattainable, it is within reason to
expect a systematic cataloguing procedure for large groups of 
definite integrals.  To this effect, one of the authors has instituted a
project to verify all the entries in the popular table 
by I. S. Gradshteyn and I. M. Ryzhik \cite{gr}.  The website
\begin{center}
{\tt{http://www.math.tulane.edu/$\sim$vhm/$\text{web}_{-}\text{html}$/pap-index.html}}
\end{center}
\noindent
contains a series of papers as a treatment to the above-alluded project.

Naturally, any document containing a 
large number of entries, such as the table \cite{gr} or the encyclopedic
treatise \cite{prudnikov1}, is 
likely to contain errors, many of which arising from transcription from 
other tables. The earliest extensive table of integrals still accessible
is \cite{bierens1}, compiled by Bierens de Haan who also presented 
in \cite{bierens2} a survey of the methods employed in the verification of 
the entries from \cite{gr}. These tables form the main source for \cite{gr}.

The revision of integral tables 
is nothing new. C. F. Lindman \cite{lindman1} compiled a long list of 
errors from the table by Bierens de Haan \cite{bierens3}. The editors of 
\cite{gr} maintain the webpage 
\begin{center}
\texttt{http://www.mathtable.com/gr/}
\end{center}
\noindent
where the corrections to the table are 
stored. 

Many techniques have been developed for  evaluating definite 
integrals, and the goal
of this paper is to present one such method 
popularized by Schl\"omilch in \cite{schlomilch1}. 
The  identity (\ref{transf1}) below appeared 
in \cite{irrbook}, where it was called the 
{\em Schl\"omilch transform}, although it was used by 
J. Liouville \cite{liouville1} to evaluate the integral 
\begin{equation}
\int_{0}^{1} \frac{t^{\mu + 1/2} (1-t)^{\mu - 1/2} \, dt}
{(a+bt-ct^{2})^{\mu+1}},
\label{int-1}
\end{equation}
\noindent
and in \cite{liouville2} Liouville  quotes a letter from 
Schl\"omilch in which he describes his approach to (\ref{int-1})
via the formula 
\begin{equation}
\int_{0}^{\infty} F \left( \frac{\alpha}{x} + \gamma x \right) 
\frac{dx}{\sqrt{x}} = \frac{1}{\sqrt{\gamma}} 
\int_{0}^{\infty} F( 2 \sqrt{\alpha 
\gamma} + y ) \frac{dy}{\sqrt{y}}
\victor
\end{equation}
\noindent
to derive the reduction
\begin{equation}
\int_{0}^{\infty} \frac{x^{m+1/2} \, dx}
{( \alpha + \beta x + \gamma x^{2})^{m+1}} = 
\frac{1}{\sqrt{\gamma}} \int_{0}^{\infty} \frac{y^{1/2-m} \, dy }
{(\beta + 2 \sqrt{\alpha \gamma}  + y)^{m+1}}.
\victor
\end{equation}
\noindent
The integral (\ref{int-1}) is then evaluated in terms of 
the {\em beta function}.  
Schl\"omilch 
also states that his method can be found in a note by A. Cauchy 
\cite{cauchy-1823} published some $35$ years earlier\footnote{This note is available at 
\texttt{http://gallica.bnf.fr/ark:/12148/bpt6k90193x.zoom.f526}}. In view of
this historical precedence, the name 
{\em Cauchy-Schl\"omilch} adopted here seems to be more 
justified. Some illustrative examples appear in the text \cite{melzak1}. \\

We present here a variety of definite integrals evaluated by use of the 
Schl\"omilch 
transform and its extensions. Several of the examples are not 
computable by the current symbolic languages. For each evaluation presented 
in the upcoming sections, we 
considered its computation using \texttt{Mathematica}. Naturally, the
statement `the integral can not be computed symbolically' has to be 
complemented with the phrase `at the present date'. 

\section{The Cauchy-Schl\"omilch 
transformation} \label{sec-schlo} 
\setcounter{equation}{0}

In this section we present the basic result accompanied with initial 
examples. Further applications are 
discussed in the remaining sections. 

\begin{Thm}
\label{main-thm}
[Cauchy-Schl\"omilch] Let 
$a, \, b >0$ and assume that $f$ is a continuous function for 
which the integrals in (\ref{transf1}) are 
convergent. Then
\ba
\ift f \left( \left(ax - bx^{-1} \right)^{2} \right) \, dx & = & 
\frac{1}{a} \ift f(y^{2}) \, dy. 
\label{transf1}
\ea
\end{Thm}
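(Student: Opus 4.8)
The plan is to make the substitution that sends the quadratic expression $(ax - bx^{-1})^2$ to a simple square, and to exploit the symmetry of the map $x \mapsto b/(ax) \cdot$ (more precisely $x \mapsto \frac{b}{a x}$-type involutions). Concretely, set $y = ax - bx^{-1}$. As $x$ runs over $(0,\infty)$, the function $\varphi(x) = ax - bx^{-1}$ is strictly increasing, with $\varphi(0^+) = -\infty$ and $\varphi(\infty) = +\infty$, so $y$ runs over all of $\mathbb{R}$ and $\varphi$ is a bijection. We have $dy = (a + bx^{-2})\,dx$. The first obstacle is that $x$ is not a rational function of $y$ alone in a way that makes $dx$ easy to write; the standard trick is to solve $ax - bx^{-1} = y$, getting $x = \frac{y + \sqrt{y^2 + 4ab}}{2a}$, and to observe that the two roots $x_\pm$ of this quadratic (in $x$) for a fixed value of $y^2$ — equivalently, the preimages under $x \mapsto (ax-bx^{-1})^2$ of a given point — are related by $x_+ x_- = b/a$, i.e. the involution $x \mapsto \frac{b}{ax}$ fixes the integrand $f((ax-bx^{-1})^2)$.

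So the key computation I would carry out is: split $\int_0^\infty f((ax-bx^{-1})^2)\,dx$ using the substitution $y = ax - bx^{-1}$ over $(0,\infty)$, writing the answer as an integral over $y \in (-\infty,\infty)$. One gets
\[
\int_0^\infty f\!\left((ax-bx^{-1})^2\right)dx = \int_{-\infty}^{\infty} f(y^2)\,\frac{dx}{dy}\,dy,
\]
and the point is to show $\frac{dx}{dy} = \frac{1}{a+bx^{-2}}$ combines, when one symmetrizes $y \to -y$ (which corresponds to $x \to b/(ax)$), to give exactly $\frac{1}{a}$ times the standard measure. Equivalently — and this is the cleaner route — one can differentiate directly: note that $\frac{d}{dx}\left(ax - bx^{-1}\right) = a + bx^{-2}$ and that under the involution $x \mapsto b/(ax)$ the quantity $ax - bx^{-1}$ merely changes sign while $a + bx^{-2}$ maps to $a(ax/b)^2 \cdot$(something) — the cleanest statement is that $\frac{1}{2}\left[(a+bx^{-2}) + (\text{its image under } x\mapsto b/(ax))\cdot\text{Jacobian}\right]$ collapses. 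I would therefore organize it as: (i) substitute $u = ax - bx^{-1}$ on $(0,\infty)$; (ii) observe the resulting integral over $u \in \mathbb{R}$ has integrand $f(u^2)$ times a weight; (iii) fold the negative-$u$ part onto the positive-$u$ part using the involution and check the two weights sum to $1/a$.

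The main obstacle is purely bookkeeping: correctly tracking the Jacobian through the quadratic inversion and confirming that the contributions from $y>0$ and $y<0$ (equivalently, from $x > \sqrt{b/a}$ and $x < \sqrt{b/a}$) reassemble into a single clean factor $1/a$. There is no analytic subtlety beyond the hypothesis that both integrals converge, which is assumed; continuity of $f$ guarantees the change of variables is legitimate on each piece. A slicker alternative worth mentioning is to verify the identity first for $f(t) = e^{-st}$ (where both sides are classical Gaussian-type integrals and the identity reduces to $\int_0^\infty e^{-s(ax-bx^{-1})^2}dx = \frac{1}{a}\int_0^\infty e^{-sy^2}dy = \frac{1}{2a}\sqrt{\pi/s}$, a known evaluation), and then invoke density of such exponentials; but the direct substitution argument is more elementary and self-contained, so I would present that as the primary proof.
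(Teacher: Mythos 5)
Your proposal is correct and follows essentially the same route as the paper: both rest on the involution $x \mapsto b/(ax)$ together with the monotone substitution $u = ax - bx^{-1}$, the only difference being that the paper averages the integral with its image under the involution before substituting, while you substitute first and then fold $y \mapsto -y$. The weight identity your step (iii) defers, $\frac{1}{a+bx^{-2}} + \frac{1}{a+a^{2}x^{2}/b} = \frac{x^{2}}{ax^{2}+b} + \frac{b}{a(ax^{2}+b)} = \frac{1}{a}$, does check out, so the argument closes as claimed.
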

\begin{proof}
The change of variables $t = b/ax$ yields
\ba
I & = & \ift f \left( \left(ax - b/x \right)^{2} \right) \, dx \nn \\
& = & \frac{b}{a}  \ift f \left( \left(at - b/t \right)^{2} \right) \, 
t^{-2} \, dt.  \nn
\ea
\no
The average of these two representations, followed by the change of variables 
$u = ax - b/x$ completes the proof.
\end{proof}

The next result is a direct consequence of Theorem \ref{main-thm}.

\begin{Cor}
Preserve the assumptions of Theorem \ref{main-thm}. Let 
$\begin{displaystyle} h_{n} = \sum_{k=0}^{n} c_{k}x^{2k+1}
\end{displaystyle}$  be an odd  polynomial. Define 
\begin{equation}
g_{n}(x) = x \left( \sum_{k=0}^{n} d_{k}x^{k} \right)^{2},
\end{equation}
\noindent
where 
\begin{equation}
d_{k} = \sum_{j=k}^{n} \binom{k+j}{2k} \frac{2j+1}{2k+1} (ab)^{j-k} c_{j}.
\label{form-d}
\end{equation}
\noindent
Then
\begin{equation}
\int_{0}^{\infty} f \left( \left[ h_{n}(ax) - h_{n}(bx^{-1}) \right]^{2}
\right) \, dx = \frac{1}{a} \int_{0}^{\infty} f(g_{n}(y^{2})) \, dy. 
\label{form-d1}
\victor
\end{equation}
\end{Cor}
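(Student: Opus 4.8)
The plan is to reduce (\ref{form-d1}) to Theorem \ref{main-thm} applied to the composite $f\circ g_{n}$, by showing that $h_{n}(ax)-h_{n}(bx^{-1})$ is a polynomial in the single quantity $u:=ax-bx^{-1}$ — in fact an odd polynomial whose coefficients are exactly the $d_{k}$ of (\ref{form-d}). Write $p=ax$ and $q=bx^{-1}$, so that $p-q=u$ while $pq=ab$ does not depend on $x$. The first, and essential, step is the polynomial identity
\begin{equation}
\frac{p^{2j+1}-q^{2j+1}}{p-q}=\sum_{k=0}^{j}\frac{2j+1}{2k+1}\binom{j+k}{2k}(pq)^{j-k}(p-q)^{2k},
\label{key-id}
\end{equation}
valid identically (as polynomials in $p$ and $q$) for every integer $j\ge 0$. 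Granting (\ref{key-id}), multiply the $j$-th instance by $c_{j}$, sum over $0\le j\le n$, put $pq=ab$, and interchange the two summations; the outer sum over $k$ then acquires coefficients $\sum_{j=k}^{n}\binom{j+k}{2k}\frac{2j+1}{2k+1}(ab)^{j-k}c_{j}=d_{k}$, so that
\[
h_{n}(ax)-h_{n}(bx^{-1})=u\sum_{k=0}^{n}d_{k}u^{2k}.
\]
Squaring and using $g_{n}(w)=w\left(\sum_{k=0}^{n}d_{k}w^{k}\right)^{2}$ gives $\left[h_{n}(ax)-h_{n}(bx^{-1})\right]^{2}=u^{2}\left(\sum_{k=0}^{n}d_{k}u^{2k}\right)^{2}=g_{n}(u^{2})$, i.e.\ the integrand on the left of (\ref{form-d1}) equals $(f\circ g_{n})\big((ax-bx^{-1})^{2}\big)$.

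The second step is then immediate. Since $g_{n}$ is a polynomial, $f\circ g_{n}$ is continuous, and the convergence hypotheses built into (\ref{form-d1}) allow Theorem \ref{main-thm} to be applied with $f$ replaced by $f\circ g_{n}$. This yields
\[
\int_{0}^{\infty}f\big(g_{n}((ax-bx^{-1})^{2})\big)\,dx=\frac{1}{a}\int_{0}^{\infty}f\big(g_{n}(y^{2})\big)\,dy,
\]
which is precisely (\ref{form-d1}).

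It remains to prove (\ref{key-id}), which I regard as the only real obstacle. I would argue by induction on $j$. Since $p^{2}$ and $q^{2}$ are the roots of $z^{2}-(p^{2}+q^{2})z+(pq)^{2}$, both sequences $\{p^{2j+1}\}_{j}$ and $\{q^{2j+1}\}_{j}$ satisfy $X_{j}=(p^{2}+q^{2})X_{j-1}-(pq)^{2}X_{j-2}$, hence so does their difference; dividing by $p-q$ gives the two-step recurrence
\[
V_{2j+1}=(p^{2}+q^{2})V_{2j-1}-(pq)^{2}V_{2j-3},\qquad V_{m}:=\frac{p^{m}-q^{m}}{p-q}.
\]
Writing $s=pq$ and using $p^{2}+q^{2}=(p-q)^{2}+2s$ together with $(pq)^{2}=s^{2}$, this is a genuine polynomial recurrence in the two variables $(p-q)^{2}$ and $s$, so it suffices to check that the right-hand side of (\ref{key-id}) obeys it. Extracting the coefficient of $s^{j-k}(p-q)^{2k}$ reduces this to the elementary identity
\[
\frac{2j+1}{2k+1}\binom{j+k}{2k}=\frac{2j-1}{2k-1}\binom{j+k-2}{2k-2}+2\,\frac{2j-1}{2k+1}\binom{j+k-1}{2k}-\frac{2j-3}{2k+1}\binom{j+k-2}{2k}
\]
(with $\binom{n}{m}$ understood to vanish unless $0\le m\le n$), together with the base cases $j=0,1$; I expect this to be the only computational point. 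As an independent check, one may note that (\ref{key-id}) is homogeneous of degree $2j$ in $(p,q)$, hence determined by its restriction to $pq=1$; setting $p=e^{i\theta}$, $q=e^{-i\theta}$ there turns (\ref{key-id}) into the classical expansion $\sin((2j+1)\theta)/\sin\theta=\sum_{k=0}^{j}(-1)^{k}\frac{2j+1}{2k+1}\binom{j+k}{2k}(2\sin\theta)^{2k}$ of the Chebyshev polynomial $U_{2j}$, which can be quoted from the literature.
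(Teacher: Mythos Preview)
Your proof is correct and follows essentially the same route as the paper: write $H_{n}(x)=h_{n}(ax)-h_{n}(bx^{-1})$ as a polynomial in $u=ax-bx^{-1}$ via the identity $\psi_{j}(x)=\sum_{k}\binom{j+k}{2k}\tfrac{2j+1}{2k+1}(ab)^{j-k}\phi_{k}(x)$ (your (\ref{key-id}) multiplied through by $p-q$), deduce $H_{n}^{2}=g_{n}(u^{2})$, and then apply Theorem~\ref{main-thm} to $f\circ g_{n}$. The only difference is that the paper merely asserts this transformation rule between $\psi_{k}$ and $\phi_{j}$, whereas you actually prove it by induction and note the Chebyshev interpretation; this extra care is welcome and does not change the structure of the argument.
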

\begin{proof}
Denote 
\begin{equation}
H_{n}(x) = h_{n}(ax) - h_{n}(bx^{-1}) = \sum_{k=0}^{n} c_{k} \psi_{k}(x)
\end{equation}
\noindent
where $\psi_{k}(x) = (ax)^{2k+1}-(bx^{-1})^{2k+1}$ and let 
$\phi_{k}(x) = (ax-bx^{-1})^{2k+1}$. Then, the polynomials $\psi_{k}$ and 
$\phi_{k}$ obey the transformation rule 
\begin{equation}
\psi_{k}(x) = \sum_{j=0}^{k} \binom{k+j}{2j} \frac{2k+1}{2j+1} (ab)^{k-j}
\phi_{j}(x). 
\end{equation}
\noindent
The expression for $H_{n}$ in terms of $\phi_{k}$ 
follows directly from this. Moreover, 
$H_{n}^{2}(x) = g_{n}\left( (ax-bx^{-1})^{2} \right)$ and the result is 
obtained by applying the 
Cauchy-Schl\"omilch formula to $f(g_{n})$.
\end{proof}

\begin{Example}
\label{beauty-1}
For $n \in \mathbb{N}$, 
\begin{equation}
\int_{0}^{\infty} \left[ (x^2+x^{-6})(x^4-x^2+1)-1 \right] 
e^{-(x^{7}-x^{-7})^{2n}} \, dx = \frac{1}{14n} \Gamma \left( \frac{1}{2n} 
\right).
\victor
\nonumber
\end{equation}
\noindent
In order to verify this value, write the Laurent polynomials of the 
integrand according to (\ref{form-d})
\begin{equation}
x^{7}-x^{-7} = 7y + 14y^{3} + 7y^{5} +y^{7}, 
\nonumber
\victor
\end{equation}
\noindent
and write the integrand as a function of $y = x - x^{-1}$, to obtain
\begin{equation}
7(x^{2}+x^{-6})(x^4-x^2+1)-7 = 7 + 42y^{2} + 35y^{4} + 7y^{6}.
\victor
\nonumber
\end{equation}
Implement (\ref{form-d1}) on 
\begin{equation}
f(y^{2}) = (7 + 42y^{2} + 35y^{4} + 7y^{6}) e^{-(y^{2}(7+14y^{2} + 7y^{4} + 
y^{6})^{2})^{n}}, 
\end{equation}
\noindent
to yield
\begin{equation}
\int_{0}^{\infty} 7 \left[ (x^2+x^{-6})(x^4-x^2+1)-1 \right] 
e^{-(x^{7}-x^{-7})^{2n}} \, dx = 
\int_{0}^{\infty} f(y^{2}) dy. 
\nonumber
\end{equation}
\noindent
The last step is an outcome of the 
substitution $z = 7y + 14y^{3} + 7y^{5} + y^{7}$. 
Hence
\begin{eqnarray}
\int_{0}^{\infty} f(y^{2}) dy & = &
\int_{0}^{\infty} (7 + 42y^{2} + 35y^{4} + 7y^{6}) 
e^{-(7y+14y^{3}+7y^{5}+y^{7})^{2n}} \, dy  \nonumber \\
& = & \int_{0}^{\infty} e^{-z^{2n}} \, dz = \frac{1}{2n} 
\Gamma \left( \frac{1}{2n} \right). \nonumber
\end{eqnarray}
\end{Example}

\begin{Example}
Proceeding  as in the previous example, we obtain
\begin{equation}
\int_{0}^{\infty} \left[ 7(x^{2}+x^{-6})(x^{4}-x^{2}+1) - 6 \right] 
e^{-(x+x^{7} -x^{-1}-x^{-7})^{2n}} \, dx = 
\frac{1}{2n} 
\Gamma \left( \frac{1}{2n} \right). \nonumber
\victor
\end{equation}
\noindent
The details are left to the reader. 
\end{Example}

\begin{Example}
The choice $y = x-x^{-1}, \, x^{3}-x^{-3} = 3y+y^{3}$ followed by 
$z= 3y+y^{3}$ produces
\begin{equation}
\int_{0}^{\infty} \frac{x^{4}-x^{2}+1 }{x^{2} } 
\prod_{j=0}^{\infty} \left[ 1 + (x^{3}-x^{-3})^{2} \, \nu^{2j} \right]^{-1}
\, dx   = 
\nonumber
\end{equation}
\begin{eqnarray}
\quad \quad   & =  & \int_{0}^{\infty}  (1 + y^{2}) 
\prod_{j=0}^{\infty} \left[ 1 + (3y+y^{3}))^{2} \, \nu^{2j} \right]^{-1}
\, dy \nonumber \\
\quad \quad & = &  \frac{1}{3} \int_{0}^{\infty} \prod_{j=0}^{\infty} 
(1+ z^{2} \nu^{2j})^{-1} \, dz \nonumber \\
 \quad \quad  & = & \frac{\pi}{6} (1 + \nu + \nu^{3} + \nu^{6} + \nu^{10} + \cdots )^{-1}. 
\nonumber
\end{eqnarray}
\end{Example}
\section{An integral due to Laplace} \label{sec-laplace} 
\setcounter{equation}{0}

The example described in this section is the original problem to which the 
Cauchy-Schl\"omilch transformation was applied. 

\begin{Example}
\label{example2.1}
The {\em normal integral} is 
\begin{equation}
\ift e^{-y^{2}} \, dy  =  \frac{\sqrt{\pi}}{2}. 
\label{normal-1}
\victor
\end{equation}
\noindent
The reader will find in \cite{irrbook} a variety of proofs of this fundamental
identity. 

Take $f(x) = e^{-x}$ in Theorem \ref{main-thm} to obtain
\begin{equation}
\ift e^{-(ax-b/x)^{2}} \; dx  =  \frac{\sqrt{\pi}}{2a}. 
\victor
\label{int-gr1}
\end{equation}
\no
Expanding the integrand and replacing the parameters $a$ and $b$ by their 
square roots produces entry $3.325$ in \cite{gr}: 
\begin{equation}
\int_{0}^{\infty} \text{exp} \left( -ax^{2}- b/x^{2} \right) \, dx = 
\frac{1}{2} \sqrt{\frac{\pi}{a}} e^{-2 \sqrt{ab}}.
\label{rel-1}
\victor
\end{equation}

The change of variable $x  = \sqrt{b}t/\sqrt{a}$ shows that the 
result (\ref{rel-1}) can be written in terms of a single parameter $c = ab$ as
\begin{equation}
\ift e^{-c(t -1/t)^{2}} \, dt  =  \frac{1}{2} \sqrt{\frac{\pi}{c}}.
\victor
\label{single-para}
\end{equation}
\end{Example}

\medskip

\begin{Example}
A host of other entries in \cite{gr} are amenable to  the 
Cauchy-Schl\"omilch 
transformation. For example, $3.324.2$ states that
\begin{equation}
\int_{-\infty}^{\infty} \text{exp} \left[ -(x - b/x)^{2n} \right] \, dx = 
\frac{1}{n} \Gamma \left( \frac{1}{2n} \right).
\label{rel-2}
\victor
\end{equation}
\noindent
This is now evaluated by choosing $f(x) = e^{-x^{n}}$ in
Theorem \ref{main-thm} so that
\begin{equation}
\int_{-\infty}^{\infty} \text{exp} \left[ -(x - b/x)^{2n} \right] \, dx   = 
 2 \int_{0}^{\infty} e^{-y^{2n}} \, dy. \nonumber 
\victor
\end{equation}
\noindent
The change of variables $t = y^{2n}$ and the
integral representation for the {\em gamma function}
\begin{equation}
\Gamma(a) = \int_{0}^{\infty} e^{-t}t^{a-1} \, dt
\victor
\end{equation}
\noindent
imply (3.5).
\end{Example}

\medskip

\begin{Example}
The expression $ t - 1/t$ in (\ref{single-para}) suggests
a natural change of variables $t = e^{u}$. This yields
\begin{equation}
\int_{-\infty}^{\infty} e^{u - c \text{ sinh}^{2}u } \, du  =  
\sqrt{\frac{\pi}{c}}. 
\victor
\label{sinh-1}
\end{equation}
\no
The latest version of \texttt{Mathematica} is unable to produce this result 
when $c$ is an arbitrary parameter. It does evaluate (\ref{sinh-1}) if $c$
is assigned a specific real value. 
\end{Example}

\section{An integral with three parameters} \label{sec-threeparam} 
\setcounter{equation}{0}

The introduction of parameters in a definite integral provides a greater 
flexibility in its evaluation. Many classical integrals are presented in 
\cite{bomo1} as special cases of the next theorem, which now appears 
as $3.242.2$ in \cite{gr}. The proof given below is in the spirit of the 
original observation of A. Cauchy. 

\begin{Thm}
\label{boros-3param}
Let 
\begin{eqnarray}
I_{1} &  = & \int_{0}^{\infty} \left( \frac{x^{2}}{x^{4}+2ax^{2}+1} \right)^{c} 
\cdot \frac{x^{2}+1}{x^{b}+1} \, \frac{dx}{x^{2}} \nonumber \\
I_{2} &  = & \int_{0}^{\infty} \left( \frac{x^{2}}{x^{4}+2ax^{2}+1} \right)^{c} 
\, \frac{dx}{x^{2}} \nonumber \\
I_{3} &  = & \int_{0}^{\infty} \left( \frac{x^{2}}{x^{4}+2ax^{2}+1} \right)^{c} 
\, dx \nonumber \\
I_{4} &  = & \frac{1}{2}
\int_{0}^{\infty} \left( \frac{x^{2}}{x^{4}+2ax^{2}+1} \right)^{c} 
\, \frac{x^{2}+1}{x^{2}} \, dx. \nonumber
\end{eqnarray}
\noindent
Then $I_{1}=I_{2}=I_{3}=I_{4}$ and this common value is 
\begin{equation}
I(a,b;c) = 2^{-1/2-c}(1+a)^{1/2-c} B \left( c - \tfrac{1}{2}, \tfrac{1}{2} 
\right). 
\label{master}
\end{equation}
\end{Thm}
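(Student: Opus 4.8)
The plan is to reduce all four integrals to one Cauchy--Schl\"omilch integral and then evaluate that single integral by Theorem~\ref{main-thm}. The starting point is the algebraic identity
\[
\frac{x^{2}}{x^{4}+2ax^{2}+1} = \frac{1}{x^{2}+2a+x^{-2}} = \frac{1}{(x-x^{-1})^{2}+2(1+a)},
\]
so that, writing $f(u) = \bigl( u + 2(1+a) \bigr)^{-c}$, each of $I_{1}, I_{2}, I_{3}, I_{4}$ has the shape $\int_{0}^{\infty} f\bigl( (x-x^{-1})^{2} \bigr)\, w(x)\, dx$ for a suitable weight $w$, with $f$ depending on $(x-x^{-1})^{2}$ alone; this is precisely the input required by Theorem~\ref{main-thm} taken with $a = b = 1$.

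First I would establish the equalities among the $I_{j}$ using only the involution $x \mapsto 1/x$ of $(0,\infty)$, which fixes $(x-x^{-1})^{2}$ and carries $dx$ to $dx/x^{2}$. Applying it to $I_{2}$ turns $dx/x^{2}$ into $dx$, giving $I_{2} = I_{3}$. Splitting $\dfrac{x^{2}+1}{x^{2}} = 1 + x^{-2}$ in $I_{4}$ gives $I_{4} = \tfrac{1}{2}(I_{3} + I_{2}) = I_{3}$. For $I_{1}$, the substitution $x \mapsto 1/x$ replaces the weight $\dfrac{x^{2}+1}{x^{2}(x^{b}+1)}$ by $\dfrac{x^{b}(x^{2}+1)}{x^{2}(x^{b}+1)}$; averaging the two resulting representations of $I_{1}$, the factors $\dfrac{1}{x^{b}+1}$ and $\dfrac{x^{b}}{x^{b}+1}$ add to $1$, leaving $I_{1} = \tfrac{1}{2} \int_{0}^{\infty} f\bigl( (x-x^{-1})^{2} \bigr) \dfrac{x^{2}+1}{x^{2}}\, dx = I_{4}$. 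Hence $I_{1} = I_{2} = I_{3} = I_{4}$; note that the parameter $b$ drops out entirely.

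Finally I would compute this common value through $I_{3}$. By Theorem~\ref{main-thm},
\[
I_{3} = \int_{0}^{\infty} f\bigl( (x-x^{-1})^{2} \bigr)\, dx = \int_{0}^{\infty} \frac{dy}{\bigl( y^{2} + 2(1+a) \bigr)^{c}},
\]
and the substitution $y = \sqrt{2(1+a)}\,\tan\theta$ converts the right-hand side into
\[
\tfrac{1}{2}\bigl( 2(1+a) \bigr)^{1/2-c} \int_{0}^{\pi/2} \cos^{2c-2}\theta\, d\theta = 2^{-1/2-c}(1+a)^{1/2-c}\, B\!\left( c - \tfrac{1}{2}, \tfrac{1}{2} \right),
\]
which is (\ref{master}).

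The weight bookkeeping and the closing trigonometric (Beta) integral are routine. The one step that needs care is the computation for $I_{1}$: one must track the weight correctly under $x \mapsto 1/x$ and check that the $x^{b}$ contributions truly cancel in the average. It is also worth recording the convergence restrictions --- essentially $\realpart c > \tfrac{1}{2}$, $a > -1$, and a mild condition on $b$ guaranteeing that $I_{1}$ converges --- under which all these manipulations are valid.
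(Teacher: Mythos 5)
Your argument is correct and, for the evaluation itself, is essentially the paper's: both reduce $I_{3}$ via the identity $x^{2}/(x^{4}+2ax^{2}+1)=\bigl((x-x^{-1})^{2}+2(a+1)\bigr)^{-1}$ and Theorem \ref{main-thm}, and then recognize a Beta integral --- the paper via the substitution $u=2(a+1)/(x^{2}+2(a+1))$, you via $y=\sqrt{2(1+a)}\tan\theta$; these are the same computation in different coordinates. Where you genuinely diverge is the equality $I_{1}=I_{2}=I_{3}=I_{4}$: the paper notes that $g(x)=\bigl(x^{2}/(x^{4}+2ax^{2}+1)\bigr)^{c}(x^{2}+1)/x^{2}$ satisfies $g(1/x)=x^{2}g(x)$ and differentiates $\int_{0}^{\infty}g(x)/(x^{b}+1)\,dx$ in $b$ to show it is independent of $b$ (so that $b=0,2,-2$ recover $I_{4},I_{2},I_{3}$), whereas you apply the involution $x\mapsto 1/x$ directly and average, making the factors $1/(x^{b}+1)$ and $x^{b}/(x^{b}+1)$ sum to $1$. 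The two devices rest on the same symmetry, but your averaging version is more elementary and self-contained (no differentiation under the integral sign to justify), while the paper's version isolates a reusable principle about weights $1/(x^{b}+1)$. One small bookkeeping slip: the tangent substitution produces $\bigl(2(1+a)\bigr)^{1/2-c}\int_{0}^{\pi/2}\cos^{2c-2}\theta\,d\theta$ with no factor $\tfrac12$ in front; the $\tfrac12$ enters only through $\int_{0}^{\pi/2}\cos^{2c-2}\theta\,d\theta=\tfrac12 B\bigl(c-\tfrac12,\tfrac12\bigr)$, so your intermediate display misplaces a factor of $2$ even though the final value \eqref{master} is correct. Your closing remark on validity ($\realpart c>\tfrac12$, $a>-1$) is appropriate and is in fact more explicit than the paper.
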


\begin{proof}
Observe that if
$g$ satisfies $g(1/x) = x^{2}g(x)$, differentiation with 
respect to the parameter $b$ shows
that the integral of $g(x)/(x^{b}+1)$ 
over $[0, \infty)$ is independent of $b$.  This proves the equivalence 
of the four stated integrals. 

Theorem \ref{main-thm} is now used to evaluate $I_{3}$. For any function $f$, 
the Cauchy-Schl\"omilch transformation gives
\begin{eqnarray}
\int_{0}^{\infty} f \left( \frac{x^{2}}{x^{4}+2ax^{2}+1} \right) \, dx & = & 
\int_{0}^{\infty} f \left( \frac{1}{(x - x^{-1})^{2}+2a+2} \right) \, dx 
\nonumber \\
& = & \int_{0}^{\infty} f \left( \frac{1}{x^{2}+2(a+1)} \right). \nonumber
\end{eqnarray}
\noindent
Apply this to $f(x) = x^{c}$ and use the 
change of variables $u = \frac{2(a+1)}{x^{2}+2(a+1)}$ to produce
\begin{equation}
\int_{0}^{\infty} \left( \frac{x^{2}}{x^{4}+2ax^{2}+1} \right)^{c} \, dx  =  
\frac{1}{2} \left[ 2(a+1) \right]^{\tfrac{1}{2}-c} 
\int_{0}^{1} u^{c-3/2} (1-u)^{-1/2}. \nonumber
\end{equation}
\noindent
This last integral is the special value 
$B(c - \tfrac{1}{2}, \tfrac{1}{2})$ of Euler's beta function. \\
\end{proof}

The next theorem presents an alternative form of the integral in Theorem 
\ref{main-thm}. 

\begin{Thm}
\label{thm-alter}
For any function $f$, 
\begin{equation}
\int_{0}^{\infty} f \left( \frac{bx^{2}}{x^{4}+2ax^{2}+1} \right) \, dx =
\frac{\sqrt{b}}{2 \sqrt{a_{*}}} 
\int_{0}^{1} \frac{f(a_{*}t)}{\sqrt{t(1-t)}} \, \frac{dt}{t},
\end{equation}
\noindent
where $a_{*} = \frac{b}{2(1+a)}$. 
\end{Thm}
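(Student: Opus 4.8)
The plan is to reduce the integral to the one-parameter Cauchy-Schl\"omilch form already handled by Theorem \ref{main-thm}, and then to finish with a single rational change of variables that produces the beta-type kernel on $[0,1]$.

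First I would rewrite the argument of $f$ by dividing numerator and denominator by $x^{2}$:
\[
\frac{bx^{2}}{x^{4}+2ax^{2}+1} = \frac{b}{x^{2}+2a+x^{-2}} = \frac{b}{(x-x^{-1})^{2}+2(a+1)}.
\]
Thus, setting $g(u) = f\!\left(\dfrac{b}{u+2(a+1)}\right)$, the left-hand side equals $\ift g\!\left((x-x^{-1})^{2}\right)\,dx$. Applying Theorem \ref{main-thm} with both of its parameters set equal to $1$ gives
\[
\ift f\!\left(\frac{bx^{2}}{x^{4}+2ax^{2}+1}\right)\,dx = \ift g(y^{2})\,dy = \ift f\!\left(\frac{b}{y^{2}+2(a+1)}\right)\,dy.
\]

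Next I would introduce the new variable $t$ via $a_{*}t = \dfrac{b}{y^{2}+2(a+1)}$, that is, $t = \dfrac{2(a+1)}{y^{2}+2(a+1)}$, which maps $y\in(0,\infty)$ monotonically onto $t\in(0,1)$ with $y=0$ corresponding to $t=1$. Then $y = \sqrt{2(a+1)}\,\sqrt{(1-t)/t}$, and a short computation gives $dy = -\tfrac{1}{2}\sqrt{2(a+1)}\,t^{-3/2}(1-t)^{-1/2}\,dt$, the minus sign being absorbed upon reversing the limits of integration. Since $\sqrt{2(a+1)} = \sqrt{b}/\sqrt{a_{*}}$ by the definition of $a_{*}$, and $t^{-3/2}(1-t)^{-1/2} = \big(t(1-t)\big)^{-1/2}\,t^{-1}$, this yields exactly
\[
\frac{\sqrt{b}}{2\sqrt{a_{*}}}\int_{0}^{1}\frac{f(a_{*}t)}{\sqrt{t(1-t)}}\,\frac{dt}{t},
\]
which is the claimed identity.

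The only point demanding care is purely bookkeeping: the parameters $a,b$ in the statement collide notationally with the parameters of Theorem \ref{main-thm} (here both specialized to $1$), and one must keep the orientation-reversing substitution straight, including the resulting sign and the interchange of the limits $0$ and $1$. No genuine analytic obstacle arises, since the whole argument is a composition of two substitutions applied to an identity valid for arbitrary $f$, and convergence of the transformed integrals is inherited from that of the original.
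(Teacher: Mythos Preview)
Your proof is correct and follows exactly the route sketched in the paper: apply Theorem~\ref{main-thm} (with both parameters equal to $1$) to reduce to $\int_{0}^{\infty} f\bigl(b/(y^{2}+2(a+1))\bigr)\,dy$, and then make the substitution $t = 2(a+1)/[y^{2}+2(a+1)]$. The paper's own proof is a one-line sketch of precisely these two steps; you have simply filled in the computational details, including the Jacobian and the identification $\sqrt{2(a+1)} = \sqrt{b}/\sqrt{a_{*}}$, all of which are correct.
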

\begin{proof}
This follows from the identity in Theorem \ref{main-thm} and the 
change of variable $t = 2(a+1)/[ x^{2}+2(a+1)]$. 
\end{proof}

The {\em master formula} (\ref{master}) yields many other evaluations of 
definite integrals; see \cite{roopa} for some of them.  The next theorem
provides a new class of integrals that are derived from (\ref{master}). 

\begin{Thm}
\label{thm-series}
Suppose 
\begin{equation}
f(x) = \sum_{n=1}^{\infty} c_{n}x^{n}
\nonumber
\end{equation}
\noindent
be an analytic function with $f(0) = 0$. Then
\begin{equation}
\int_{0}^{\infty} f \left( \frac{x^{2}}{x^{4}+2ax^{2}+1} \right) \, dx = 
\frac{\pi}{2^{3/2} \, \sqrt{1+a}} \sum_{n=0}^{\infty} c_{n+1} \binom{2n}{n}
u^{n},
\label{series-for}
\end{equation}
\noindent
where $u = \frac{1}{8(1+a)}$. 
\end{Thm}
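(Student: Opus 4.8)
The plan is to apply the master formula (\ref{master}) to each monomial and then reassemble the series. The first step is to specialize Theorem \ref{boros-3param}: since $I_{3}$ does not depend on $b$, taking $c=n$ for an integer $n\ge 1$ yields
\begin{equation}
\ift \left( \frac{x^{2}}{x^{4}+2ax^{2}+1} \right)^{n} \, dx = 2^{-1/2-n}(1+a)^{1/2-n}\, B\!\left(n-\tfrac12,\tfrac12\right),
\nonumber
\end{equation}
which is legitimate because the integrand decays like $x^{-2n}$ as $x\to\infty$, so the integral converges for every $n\ge 1$.

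The second step is to convert the beta value into a central binomial coefficient. Writing $B(n-\tfrac12,\tfrac12)=\Gamma(n-\tfrac12)\Gamma(\tfrac12)/\Gamma(n)$ and using $\Gamma(n-\tfrac12)=\sqrt{\pi}\,(2n-2)!/[4^{n-1}(n-1)!]$ one gets $B(n-\tfrac12,\tfrac12)=\pi\binom{2n-2}{n-1}/4^{n-1}$, and after collecting the powers of $2$ and of $1+a$ the previous display becomes
\begin{equation}
\ift \left( \frac{x^{2}}{x^{4}+2ax^{2}+1} \right)^{n} \, dx = \frac{\pi\,2^{3/2}\sqrt{1+a}}{8^{n}(1+a)^{n}}\binom{2n-2}{n-1}.
\nonumber
\end{equation}

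The third step is to substitute $f(x)=\sum_{n\ge 1}c_{n}x^{n}$ and interchange the sum with the integral. Setting $g(x)=x^{2}/(x^{4}+2ax^{2}+1)$ one checks that $0\le g(x)\le\tfrac{1}{2(1+a)}=:M$ for all $x>0$ (the maximum being attained at $x=1$). If the radius of convergence of $f$ exceeds $M$, then $\sum_{n\ge 1}|c_{n}|g(x)^{n}\le g(x)\sum_{n\ge 1}|c_{n}|M^{n-1}$, a constant multiple of $g$, which is integrable on $[0,\infty)$; here the hypothesis $f(0)=0$ is exactly what lets one power of $g$ be factored out. Dominated convergence then licenses termwise integration, giving
\begin{equation}
\ift f\!\left( g(x) \right) \, dx = \sum_{n=1}^{\infty} c_{n}\, \frac{\pi\,2^{3/2}\sqrt{1+a}}{8^{n}(1+a)^{n}}\binom{2n-2}{n-1},
\nonumber
\end{equation}
and reindexing by $m=n-1$ together with $2^{3/2}/8=2^{-3/2}$ collapses the prefactor to $\pi/(2^{3/2}\sqrt{1+a})$ and leaves $\sum_{m\ge 0}c_{m+1}\binom{2m}{m}u^{m}$ with $u=1/[8(1+a)]$, which is (\ref{series-for}).

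I expect the main obstacle to be the bookkeeping in the second and last steps, namely making the factors $2^{-1/2-n}$, $4^{-(n-1)}$ and $(1+a)^{1/2-n}$ line up with the shift $m=n-1$ so that the compact form emerges; the only genuine analytic point is the justification of termwise integration, which needs the radius of convergence of $f$ to dominate $\tfrac{1}{2(1+a)}$, equivalently $u<\tfrac14$, the radius of convergence of the binomial series $\sum\binom{2n}{n}u^{n}$ on the right.
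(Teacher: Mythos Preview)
Your proof is correct and follows the same approach as the paper: apply the master formula (\ref{master}) with $c=n$, rewrite $B(n-\tfrac12,\tfrac12)$ as $\pi\binom{2(n-1)}{n-1}/4^{n-1}$, and sum over $n$. The paper's own proof is a single sentence (``Integrate term-by-term and use the value $B(m+\tfrac12,\tfrac12)=\pi\binom{2m}{m}/2^{2m}$''), so you have in fact supplied considerably more detail than the authors, including the dominated-convergence justification and the observation that $f(0)=0$ is precisely what makes the left-hand integral finite. One small quibble: in your final paragraph, the phrase ``equivalently $u<\tfrac14$'' is not quite right---the condition on the radius of convergence of $f$ and the condition $u<\tfrac14$ are related but not equivalent statements---though this does not affect the argument.
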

\begin{proof}
Integrate term-by-term and use the value
\begin{equation}
B \left( m + \tfrac{1}{2}, \tfrac{1}{2} \right) = 
\frac{\pi}{2^{2m}} \binom{2m}{m}
\nonumber
\victor
\end{equation}
\noindent
to simplify the result.
\end{proof}

\medskip

\section{Exponentials and Bessel functions} \label{sec-exponential} 
\setcounter{equation}{0}

This section describes the application of Theorem \ref{thm-series} to a 
number of definite integrals. The Taylor expansion of 
$f(x) = 1-e^{-bx}$ with $b > 0$ leads to  an integral that can be 
evaluated in terms of the 
{\em modified Bessel} functions $I_{\nu}(x)$  defined by 
the series
\begin{equation}
I_{\nu}(x) = \sum_{j=0}^{\infty} \frac{x^{\nu + 2j}
}{j! \Gamma(\nu + j+1) \, 2^{\nu+ 2j}}.
\victor
\end{equation}
\noindent
In particular 
\begin{equation}
I_{0}(x) = \sum_{j=0}^{\infty} \frac{x^{2j}}{2^{2j} j!^{2}} \text{ and }
I_{1}(x) = \sum_{j=0}^{\infty} \frac{x^{2j+1}}{2^{2j+1} j!(j+1)!}.
\label{bessel-exp}
\victor
\end{equation}
Although, as will be seen below, our results below have more direct derivations, the following procedure is more informative.

\begin{Example}
\label{bessel-1}
For $a>-1$ and $b>0$, let $c = \frac{b}{8(1+a)}$. Then 
\begin{equation}
\int_{0}^{\infty} \left( 1 - e^{-bx^{2}/(x^{4}+2ax^{2}+1)} \right) \, dx = 
\frac{\pi b e^{-2c} }{2^{3/2} \sqrt{1+a}}  
\left[ I_{0}(2c) + I_{1}(2c) \right].
\nonumber
\victor
\end{equation}

\medskip

\noindent
The function $f(x) = 1 - e^{-bx}$ has coefficients $c_{n} = (-1)^{n+1}b^{n}/n!$
and (\ref{series-for}) yields 
\begin{equation}
\int_{0}^{\infty} \left( 1 - e^{-bx^{2}/(x^{4}+2ax^{2}+1)} \right) \, dx = 
\frac{\pi b}{2^{3/2} \, \sqrt{1+a}}  h(- b u ) 
\nonumber
\end{equation}
\noindent
where $u = 1/8(1+a)$ and 
\begin{equation}
h(x) = \sum_{n=0}^{\infty} \frac{\binom{2n}{n} }{(n+1)!} x^{n}. 
\label{h-def}
\end{equation}
\end{Example}

\smallskip
The result now follows from the relation $c = bu$ and an identification of the 
series $h$ in terms of Bessel functions. 

\begin{Prop}
\label{bessel-lemma}
The following identity holds: 
\begin{equation}
\sum_{n=0}^{\infty} \frac{\binom{2n}{n}}{(n+1)!} x^{n} 
= e^{2x} \left[ I_{0}(2x) - I_{1}(2x) \right].
\nonumber
\victor
\end{equation}
\end{Prop}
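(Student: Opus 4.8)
The plan is to prove the identity
\[
\sum_{n=0}^{\infty} \frac{\binom{2n}{n}}{(n+1)!} x^{n} = e^{2x}\left[ I_{0}(2x) - I_{1}(2x) \right]
\]
by manipulating the right-hand side into a single power series and matching coefficients. First I would substitute the defining series (\ref{bessel-exp}) for $I_{0}(2x)$ and $I_{1}(2x)$, giving
\[
I_{0}(2x) - I_{1}(2x) = \sum_{j=0}^{\infty} \frac{x^{2j}}{j!^{2}} - \sum_{j=0}^{\infty} \frac{x^{2j+1}}{j!(j+1)!} = \sum_{m=0}^{\infty} \frac{(-1)^{m}}{\lfloor m/2 \rfloor! \, \lceil m/2 \rceil!}\, x^{m},
\]
or more cleanly, noting that both pieces have the shape $x^{m}/(\text{something})!\,(\text{something})!$, one can write $I_{0}(2x)-I_{1}(2x) = \sum_{m\ge 0} a_{m} x^{m}$ with an explicit $a_{m}$. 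Then I would multiply by the series for $e^{2x} = \sum_{k\ge 0} 2^{k} x^{k}/k!$, collect the coefficient of $x^{n}$ on the right, and verify it equals $\binom{2n}{n}/(n+1)!$, which is the $n$-th Catalan-type coefficient $C_{n}/n!$ where $C_{n}=\binom{2n}{n}/(n+1)$.

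An alternative and probably cleaner route is to recognize the left-hand side as a known hypergeometric/generating-function identity. The function $\sum_{n\ge 0}\binom{2n}{n}\frac{x^{n}}{(n+1)!}$ is, up to the factorial, the exponential generating function of the Catalan numbers, and $e^{-2x}\bigl[I_{0}(2x)-I_{1}(2x)\bigr]$ is a standard closed form for it; see for instance the literature on Bessel functions and lattice paths. In that case I would either cite the identity directly or derive it by checking that both sides satisfy the same first-order linear ODE with the same initial value. Writing $H(x)$ for the left-hand series, one computes that $H$ satisfies a differential equation coming from the contiguous relations for $I_{0}, I_{1}$ (namely $I_{0}' = I_{1}$ and $I_{1}' = I_{0} - I_{1}/x$), and checking $H(0)=1$ pins down the solution uniquely.

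The main obstacle I anticipate is purely bookkeeping: aligning the indices of the two Bessel series (one runs over even powers, the other over odd powers) with the shifted index in $\binom{2n}{n}/(n+1)!$, and then performing the Cauchy product with $e^{2x}$ without sign or off-by-one errors. The binomial identity that must ultimately be verified is
\[
\sum_{k=0}^{n} \frac{2^{n-k}}{(n-k)!} a_{k} = \frac{1}{(n+1)!}\binom{2n}{n},
\]
which can be confirmed by a short induction on $n$ or by a generating-function argument; this combinatorial sum is the heart of the proof and the only place where real care is needed. I expect the whole argument to be two or three lines once the series are correctly aligned, so I would present the coefficient-matching version for transparency rather than the ODE version.
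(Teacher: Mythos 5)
Your main route is the same as the paper's first proof: expand $I_{0}(2x)-I_{1}(2x)$ as a single power series, take the Cauchy product with $e^{2x}$, and compare coefficients of $x^{n}$. Your combined coefficient $(-1)^{m}/(\lfloor m/2\rfloor!\,\lceil m/2\rceil!)$ is correct, and the sum you isolate,
\[
\sum_{k=0}^{n}\frac{2^{n-k}}{(n-k)!}\cdot\frac{(-1)^{k}}{\lfloor k/2\rfloor!\,\lceil k/2\rceil!}
=\frac{1}{(n+1)!}\binom{2n}{n},
\]
is, after multiplying through by $n!/2^{n}$, exactly the identity (\ref{wz-1}) on which the paper's proof rests. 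The genuine gap is that you stop there: this identity is the entire nontrivial content of the proposition, and your proposed justifications do not close it. A ``short induction on $n$'' is optimistic, since the floor/ceiling structure does not interact cleanly with Pascal-type recurrences, and the ``generating-function argument'' you gesture at is essentially the statement being proved (the exponential generating function of the Catalan numbers), so citing it is circular as a proof. The paper spends its whole first proof precisely here: it splits the sum into even and odd indices (the sums $S_{e}$ and $S_{o}$), guesses the two closed forms, and certifies them by the WZ method with explicit rational certificates. To complete your write-up you must either supply such a proof of the displayed identity, or abandon coefficient matching in favor of the paper's second, independent argument, which avoids the identity altogether: it uses Theorem \ref{thm-alter} to rewrite the relevant integral, the Laplace transform $\int_{0}^{1}e^{-\omega t}\,dt/\sqrt{t(1-t)}=\pi e^{-\omega/2}I_{0}(\omega/2)$, and the antiderivative $\frac{d}{dt}\left(te^{-t}(I_{0}(t)+I_{1}(t))\right)=e^{-t}I_{0}(t)$.

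A secondary inaccuracy: your fallback ODE route is not first order. The relations $I_{0}'=I_{1}$ and $I_{1}'(x)=I_{0}(x)-I_{1}(x)/x$ are correct, but they give a first-order system, hence a second-order scalar equation; both sides of the proposition satisfy $xy''+(2-4x)y'-2y=0$ (obtained on the left from $(n+2)C_{n+1}=(4n+2)C_{n}$), which has a regular singular point at $x=0$ with indicial roots $0$ and $-1$. The uniqueness argument therefore needs the extra (easy) observation that the analytic solutions at $0$ form a one-dimensional space, so that $y(0)=1$ pins the solution down; checking only one initial value of a ``first-order'' equation, as written, is not a valid argument.
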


We present two different proofs. The 
first one is elementary and is based on the WZ-method described 
in  \cite{aequalsb}. \texttt{Mathematica} actually provides a third proof 
by direct evaluation of the series. 

\begin{proof}
The expansion (\ref{bessel-exp}) yields
\begin{equation}
I_{0}(2x) - I_{1}(2x) = \sum_{r=0}^{\infty} \frac{x^{r}}{b_{r}},
\label{identity-0}
\end{equation}
\noindent
where 
\begin{equation}
b_{r} = 
\begin{cases} 
j!^{2} & \text{ if } r = 2j \\
-j!(j+1)! & \text{ if } r = 2j+1. 
\end{cases}
\nonumber
\end{equation}
\noindent
Multiplying (\ref{identity-0}) with the series for  $e^{2x}$ lends itself to 
an equivalnet formulation of the claim as the identity
\begin{equation}
\sum_{j=0}^{k} \frac{(-1)^{j}}{2^{j}} \binom{k}{j} 
\binom{j}{\lfloor{j/2\rfloor}} = 
\frac{1}{2^{k} (k+1)} \binom{2k}{k}.
\victor
\label{wz-1}
\end{equation}

The upper index of the sum is extended to infinity and (\ref{wz-1}) is 
written as 
\begin{equation}
\sum_{j \geq 0} \frac{(-1)^{j}}{2^{j}} \binom{k}{j} 
\binom{j}{\lfloor{j/2\rfloor}} = 
\frac{1}{2^{k} (k+1)} \binom{2k}{k}.
\nonumber
\victor
\end{equation}
\noindent
The even and odd indices are considered separately. Define
\begin{equation}
S_{e} := \sum_{j \geq 0} \frac{1}{2^{2j}} \binom{k}{2j} \binom{2j}{j} 
\text{ and } 
S_{o} := \sum_{j \geq 0} \frac{1}{2^{2j+1}} \binom{k}{2j+1} \binom{2j+1}{j}.
\victor
\nonumber
\end{equation}
\noindent
The result is obtained from the values
\begin{equation}
S_{e} = \frac{1}{2^{k}} \binom{2k}{k} \text{ and } 
S_{o} = \frac{k}{(k+1) \, 2^{k}} \binom{2k}{k}. 
\victor
\label{values}
\end{equation}
\noindent
To establish (\ref{values}), the WZ-method is applied to the functions
\begin{equation}
S_{e}^{*}(k) = S_{e} \binom{2n}{k}^{-1} 2^{k} \text{ and }
S_{o}^{*}(k) = S_{o} \binom{2k}{k}^{-1} \frac{k+1}{k}. 
\victor
\nonumber
\end{equation}
\noindent
The output is that both $S_{e}^{*}$ and $S_{o}^{*}$ satisfy the 
recurrence $a_{k+1} - a_{k} = 0$ with {\em certificates}  
\begin{equation}
\frac{-4j^{2}}{(2k+1)(k+1+2j)} \text{ and }
\frac{-4j(j+1)}{(2k+1)(k-2j)},
\nonumber
\end{equation}
respectively. The 
initial conditions $S_{e}^{*}(0) = S_{o}^{*}(0) = 1$ give
$S_{e}^{*}(k) \equiv S_{o}^{*}(k) \equiv  1$.
\end{proof}

\medskip

As promised above we present an alternative proof of \ref{bessel-lemma} based on Theorem \ref{thm-alter}.

\begin{proof}

Theorem \ref{thm-alter}  gives
\begin{equation}
I  :=  \int_{0}^{\infty} \left( 1 - e^{-bx^{2}/(x^{4}+2ax^{2}+1)} \right) \, dx
 =  \frac{\sqrt{b}}{2 \sqrt{a^{*}}} 
\int_{0}^{1} \frac{1-e^{-a^{*}t}}{t} \frac{dt}{\sqrt{t(1-t)}}. \nonumber 
\victor
\end{equation}
The latter is known as {\em Frullani integral} and  can be written as 
\begin{equation}
I = \frac{\sqrt{b a^{*}}}{2} 
 \int_{0}^{1} \int_{0}^{1} \frac{e^{-a^{*}ty}}{\sqrt{t(1-t)}} dy \, dt. 
\end{equation}
\noindent
Exchanging the order of integration, the inner integral is 
a well-known Laplace transform 
\cite{erderly3}(p. $366$, $19.5.11$ with $n=0$)
\begin{equation}
\int_{0}^{1} \frac{e^{-\omega t} \, dt}{\sqrt{t(1-t)}} = 
\pi e^{-\omega /2} I_{0} \left( \tfrac{\omega}{2} \right) 
\victor
\end{equation}
 whence we find
\begin{equation}
I = \frac{\pi \sqrt{b}}{\sqrt{a^{*}}} \int_{0}^{a^{*}/2} e^{-t} I_{0}(t) \, dt.
\nonumber
\end{equation}
\noindent
The relation
\begin{equation}
\frac{d}{dt} \left( te^{-t} ( I_{0}(t) + I_{1}(t) ) \right) = 
e^{-t} I_{0}(t)
\victor
\end{equation}
\noindent
now completes the proof.  
\end{proof}

\medskip

\begin{Example}
Choosing $a=0$ and $b=4$ gives 
\begin{equation}
\int_{0}^{\infty} \left( 1 - e^{-4x^{2}/(x^{4}+1)} \right) \, dx = 
\frac{\pi \sqrt{2}}{e} \left( I_{0}(1) + I_{1}(1) \right).
\nonumber
\victor
\end{equation}
\end{Example}

\begin{Example}
The values $a=1$ and $b=8$ yield
\begin{equation}
\int_{0}^{\infty} \left( 1 - e^{-8x^{2}/(x^{2}+1)^{2}} \right) \, dx = 
\frac{2 \pi}{e} \left( I_{0}(1) + I_{1}(1) \right).
\nonumber
\victor
\end{equation}
\end{Example}

\medskip

\noindent
\texttt{Mathematica} is unable to evaluate these two examples.  \\

\section{Trigonometric and Bessel functions} \label{sec-trigo} 
\setcounter{equation}{0}

The next example employs the familar Taylor 
expansion of $\sin b x$. The result is 
expressed in terms of the {\em Bessel function of the first kind}
\begin{equation}
J_{\nu}(x) = \sum_{k=0}^{\infty} \frac{(-1)^{k}}{k! \, (k+\nu)!} 
\left( \frac{x}{2} \right)^{2k + \nu}. 
\victor
\label{bes-def}
\end{equation}

\begin{Example}
\label{cool}
Let $c = b/8(1+a)$. Then
\begin{equation}
\int_{0}^{\infty} \sin \left( \frac{bx^{2}}{x^{4}+2ax^{2}+1} \right) \, dx =
\frac{\pi b }{\sqrt{8(1+a)}} 
\left[ J_{0}(2c) \cos 2c  + J_{1}(2c) \sin 2c \right].
\victor
\nonumber
\end{equation}
\noindent

\smallskip
To verify this, apply Theorem \ref{thm-series}  to $\sin bx$ to obtain
\begin{equation}
\int_{0}^{\infty} \sin \left( \frac{bx^{2}}{x^{4}+2ax^{2}+1} \right) \, dx =
\frac{\pi b }{\sqrt{8(1+a)}} 
\sum_{k=0}^{\infty} \frac{(-1)^{k}}{(2k+1)!} \binom{4k}{2k} c^{2k}.
\nonumber
\victor
\end{equation}

\begin{Lem}
The following identity holds:
\begin{equation}
\sum_{k=0}^{\infty} \frac{(-1)^{k}}{(2k+1)!} \binom{4k}{2k} c^{2k} = 
J_{0}(2c) \cos 2c + J_{1}(2c) \sin 2c.
\victor
\nonumber
\end{equation}
\noindent

\end{Lem}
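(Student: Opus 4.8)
The plan is to verify the identity by matching Taylor coefficients in $c$. Since both sides are even power series it suffices to compare the coefficient of $c^{2n}$ for each $n\ge 0$; on the left this is $(-1)^{n}\binom{4n}{2n}/(2n+1)!$. For the right-hand side I would substitute the defining series (\ref{bes-def}) for $J_{0}(2c)$ and $J_{1}(2c)$ together with the Maclaurin series of $\cos 2c$ and $\sin 2c$ and form the two Cauchy products. One checks that $J_{0}(2c)\cos 2c$ and $J_{1}(2c)\sin 2c$ are both even and that the coefficient of $c^{2n}$ on the right equals
\[
(-1)^{n}\left[\,\sum_{m=0}^{n}\frac{4^{m}}{(n-m)!^{2}\,(2m)!}-\sum_{m=0}^{n-1}\frac{2\cdot 4^{m}}{(n-1-m)!\,(n-m)!\,(2m+1)!}\,\right],
\]
so the Lemma reduces to showing that the bracketed difference equals $\binom{4n}{2n}/(2n+1)!$.

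The second step is to recognize each inner sum as a terminating Gaussian hypergeometric series at argument $1$. Using $1/(n-m)!=(-1)^{m}(-n)_{m}/n!$, $1/(n-1-m)!=(-1)^{m}(1-n)_{m}/(n-1)!$, and the duplication relations $(2m)!=4^{m}m!\,(1/2)_{m}$ and $(2m+1)!=4^{m}m!\,(3/2)_{m}$, the first sum equals $\tfrac{1}{n!^{2}}\,{}_{2}F_{1}(-n,-n;\tfrac12;1)$ and the second equals $\tfrac{2}{(n-1)!\,n!}\,{}_{2}F_{1}(1-n,-n;\tfrac32;1)$. Both are summable by Gauss's theorem (equivalently by Chu--Vandermonde, a numerator parameter being a negative integer); after simplifying the resulting Gamma quotients with the duplication formula one gets $\sum_{m}4^{m}/[(n-m)!^{2}(2m)!]=(4n)!/(2n)!^{3}$ and $\sum_{m}2\cdot 4^{m}/[(n-1-m)!(n-m)!(2m+1)!]=2n\,(4n)!/[(2n)!^{2}(2n+1)!]$. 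Subtracting and pulling out $\binom{4n}{2n}=(4n)!/(2n)!^{2}$ finishes the proof, since $\tfrac{1}{(2n)!}-\tfrac{2n}{(2n+1)!}=\tfrac{1}{(2n+1)!}$. The only delicate point is the bookkeeping of powers of $2$ and of factorials when applying the duplication identities; no genuinely new idea is needed, and the bracketed identity would also yield to the WZ machinery, exactly as in the first proof of Proposition \ref{bessel-lemma}.

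Finally, I would remark that, paralleling the second proof of that proposition, the identity admits an analytic derivation as well: applying Theorem \ref{thm-alter} to $f(x)=\sin x$, writing $\sin(\omega t)/t=\omega\int_{0}^{1}\cos(\omega ty)\,dy$, using the cosine analogue $\int_{0}^{1}\cos(\omega t)\,dt/\sqrt{t(1-t)}=\pi\cos(\tfrac{\omega}{2})\,J_{0}(\tfrac{\omega}{2})$ of the Laplace transform invoked there, and the relation $\frac{d}{ds}\bigl[s\bigl(J_{0}(s)\cos s+J_{1}(s)\sin s\bigr)\bigr]=J_{0}(s)\cos s$, one is carried directly to $J_{0}(2c)\cos 2c+J_{1}(2c)\sin 2c$; matching this with the series supplied by Theorem \ref{thm-series} gives the claimed identity. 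The main obstacle in this route is justifying the interchange of integrations and recalling (or re-deriving, via the substitution $t=\sin^{2}\phi$) the cosine transform above.
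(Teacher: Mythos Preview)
Your argument is correct and follows the same overall route as the paper: form the Cauchy products of $J_{0}(2c)\cos 2c$ and $J_{1}(2c)\sin 2c$, reduce the identity to two finite-sum formulas for the coefficient of $c^{2n}$, and then combine. Where you differ is in the verification of those two finite sums. The paper appeals to the WZ method, supplying explicit rational certificates for each. You instead rewrite each sum as a terminating ${}_{2}F_{1}$ at argument $1$ and close it with Gauss's theorem (Chu--Vandermonde); after the duplication-formula bookkeeping this gives $\sum_{m}4^{m}/[(n-m)!^{2}(2m)!]=(4n)!/(2n)!^{3}$ and the companion identity directly. Your route is more classical and self-contained, avoiding the WZ apparatus, at the cost of a little extra algebra with Pochhammer symbols and Gamma quotients; the paper's route is mechanical once the certificates are in hand. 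The analytic alternative you sketch at the end (Theorem~\ref{thm-alter}, the cosine transform $\int_{0}^{1}\cos(\omega t)\,dt/\sqrt{t(1-t)}=\pi\cos(\omega/2)J_{0}(\omega/2)$, and the derivative identity for $s[J_{0}(s)\cos s+J_{1}(s)\sin s]$) is exactly the Frullani-style argument the paper records in the Note following Example~\ref{cool}.
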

\begin{proof}
Using the Cauchy product and the series expression (\ref{bes-def}), the 
right-hand side is written as
\begin{equation}
J_{0}(2c) \cos 2c + J_{1}(2c) \sin 2c  = \nonumber
\end{equation}
\begin{eqnarray}
& = & \sum_{k=0}^{\infty} \sum_{j=0}^{k} \frac{(-1)^{k} c^{2k} 4^{j}}
{(2j)! (k-j)!^{2}} + 
2c^{2} \sum_{k=0}^{\infty} \sum_{j=0}^{k} \frac{(-1)^{k} c^{2k} 4^{j}}
{(2j+1)! (k-j)! (k-j+1)!} \nonumber \\
& = & \sum_{k=0}^{\infty} \frac{(-1)^{k} c^{2k} (4k)!}{(2k)!^{3}} + 
2c^{2} \sum_{k=0}^{\infty} \frac{(-1)^{k} c^{2k} (4k+3)!}{(2k+3)! (2k+1)^{2}} 
\nonumber \\
& = & \sum_{k=0}^{\infty} \frac{(-1)^{k}}{(2k+1)!} \binom{4k}{2k} c^{2k}.
\nonumber
\end{eqnarray}
\noindent
The passage from the first to the second equality is justified by the 
identities 
\begin{equation}
\sum_{j=0}^{k} \frac{4^{j}}{(2j)! (k-j)!^{2}} = 
\frac{1}{(2k)!} \binom{4k}{2k} \text{ and }
\sum_{j=0}^{k} \frac{4^{j}}{(2j)! (k-j)! \, (k-j+1)!} = 
\frac{4k+3}{(2k+3)!} \binom{4k+2}{2k+1}. \nonumber
\end{equation}
\noindent
Both of these formulas are in turn verifiable via the WZ-method \cite{aequalsb}
with their respective rational certificates
\begin{equation}
\frac{(6n^{2}+10n+4-4nk-34k)(2k-1)k}{(n+1-k)^{2} (4n+1)(4n+3)} \text{ and }
\frac{(20n+17+6n^{2}-4nk-7k)(2k-1)}{(n+1-k)(n+2-k) (4n+5)(4n+7)}.
\nonumber
\end{equation}
\end{proof}

\medskip

\begin{Example}
The choice $a=0$ and $b=1$ in Example \ref{cool} produces 
\begin{equation}
\int_{0}^{\infty} \sin \left( \frac{x^{2}}{x^{4}+1} \right) \, dx = 
\frac{\pi}{2 \sqrt{2}} \left[
J_{0} \left( \tfrac{1}{4} \right) 
\cos \left( \tfrac{1}{4} \right) + 
J_{1} \left( \tfrac{1}{4} \right) 
\sin \left( \tfrac{1}{4} \right)
\right].
\label{nice-trick}
\victor
\end{equation}
\end{Example}

\begin{Example}
By choosing $a=b=1$ in Example \ref{cool}, we get 
\begin{equation}
\int_{0}^{\infty} 
\sin \left( \left[\frac{x}{x^{2}+1} \right]^{2} \right) \, dx = 
\frac{\pi}{4} \left[
J_{0} \left( \tfrac{1}{8} \right) 
\cos \left( \tfrac{1}{8} \right) + 
J_{1} \left( \tfrac{1}{8} \right) 
\sin \left( \tfrac{1}{8} \right)
\right]. 
\victor
\end{equation}
\end{Example}

\noindent
As the time of this writing, \texttt{Mathematica} is unable to evaluate 
the integrals in the two previous examples. \\

\noindent
{\bf Note}. The function 
\begin{equation}
g(u) = \sum_{k=0}^{\infty} \frac{(-1)^{k}}{(2k+1)!} \binom{4k}{2k} 
u^{2k}
\end{equation}
\noindent
also has a hypergeometric form  as 
\begin{equation}
g(u) = 
{}_{2}F_{3}\bigg[{\tfrac{1}{4} \,\,\, \tfrac{3}{4} \atop \tfrac{1}{2} \,\,\, 
 1 \,\,\, 
\tfrac{3}{2}}; -4u^{2} \bigg].
\label{standard}
\victor
\end{equation}
\noindent
This follows directly from the identity
\begin{equation}
\frac{\binom{4k}{2k}}{(2k+1)!} = 
\frac{2^{2k-3/2} \Gamma(k + 1/4) \Gamma(k + 3/4)}
{\Gamma(k+1/2) \, \Gamma^{2}(k+1) \, \Gamma(k + 3/2)}, 
\victor
\nonumber
\end{equation}
\noindent
that is established via the duplication formula of the gamma function
\begin{equation}
\Gamma(2x) = \frac{1}{\sqrt{\pi}} 2^{2x-1} \Gamma(x) \Gamma(x + \tfrac{1}{2})
\nonumber
\victor
\end{equation}
\noindent
and its iteration
\begin{equation}
\Gamma(4x) = \frac{1}{\pi \sqrt{\pi}} 2^{8x-5/2} 
\Gamma(x) \Gamma(x + \tfrac{1}{4})
\Gamma(x + \tfrac{1}{2} ) \Gamma(x + \tfrac{3}{4})
\nonumber
\victor
\end{equation}
\noindent
Then $\Gamma(a+k) = (a)_{k} \Gamma(a)$ produces (\ref{standard}).  Here 
$(a)_{k} = a(a+1) \cdots (a+k-1)$ is the Pochhammer symbol.  This gives 
an alternative form of the result described in Example \ref{cool}, i.e., 

\begin{equation}
\int_{0}^{\infty} \sin \left( \frac{bx^{2}}{x^{4}+2ax^{2}+1} \right) \, dx =
\frac{\pi \, b}{2 \sqrt{2(1+a)}} \,  \, 
{}_{2}F_{3}\bigg[{\tfrac{1}{4} \,\,\, \tfrac{3}{4} \atop \tfrac{1}{2} \,\,\, 
 1 \,\,\, 
\tfrac{3}{2}}; -b^{2}/16(1+a)^{2} \bigg].
\victor
\nonumber
\end{equation}

\medskip

\noindent
{\bf Note}. Proceeding as in Example \ref{beauty-1} we can obtain
\begin{multline}
\int_{0}^{\infty} (x^{2} + x^{-2}-1) 
\sin \left( \frac{x^{6}+x^{-6}-2}{x^{12} - 4x^{6} - 4x^{-6} + x^{-12} +7} 
\right) \, dx =  \\
\frac{\pi}{6 \sqrt{2}} \left[ 
J_{0} \left( \tfrac{1}{4} \right)  \cos \tfrac{1}{4}+ 
J_{1} \left( \tfrac{1}{4} \right)  \sin \tfrac{1}{4}
\right]. 
\nonumber
\victor
\end{multline}
\noindent
\texttt{Mathematica} is unable to compute the preceding integral.  \\

\noindent
{\bf Note}. The result in Example \ref{cool} 
can also be established by the Frullani
method described in Section \ref{sec-exponential}. Start with
\begin{equation}
I := \int_{0}^{\infty} \sin \left( \frac{bx^{2}}{x^{4}+2ax^{2}+1} \right) 
\, dx = \frac{\sqrt{b}}{2 \sqrt{a^{*}}} \int_{0}^{1} \frac{\sin(a^{*}t)}{t} 
\frac{dt}{\sqrt{t(1-t)}}. 
\nonumber
\end{equation}
\noindent
As before, write
\begin{equation}
I = \frac{\sqrt{a^{*} b}}{2} \int_{0}^{1} \int_{0}^{1} \frac{\cos(a^{*}ty)}
{\sqrt{t(1-t)}} dt \, dy. 
\nonumber
\end{equation}
\noindent
The inner integral is a well-known cosine transform 
(\cite{erderly2},p.12, $1.3.14$\footnote{Note that formula $1.3.14$ in 
\cite{erderly2} is incorrect.} 
\begin{equation}
\int_{0}^{1} \frac{\cos( \omega t) \, dt}{\sqrt{t(1-t)}} = 
\pi J_{0} \left( \frac{\omega}{2} \right) \cos \frac{\omega}{2}, 
\victor
\end{equation}
\noindent
and it follows that
\begin{equation}
I = \frac{\pi \sqrt{b}}{\sqrt{a^{*}}} \int_{0}^{a^{*}/2} 
\cos t \, J_{0}(t) \, dt. 
\nonumber
\end{equation}
\noindent
The result is now immediate from the identity
\begin{equation}
\frac{d}{dt} 
\left[ t \left( \cos t \, J_{0}(t) + \sin t \, J_{1}(t) \right) \right] = 
\cos t \, J_{0}(t).
\victor
\nonumber
\end{equation}
\end{Example}

\section{The sine integral and Bessel functions}
\label{sec-sine}
\setcounter{equation}{0}

The {\em sine integral} is defined by 
\begin{equation}
\text{Si}(x) := \int_{0}^{x} \frac{\sin t}{t} \, dt. 
\end{equation}
\noindent 
The Cauchy-Schl\"omilch transformation can be employed to prove 
\begin{equation}
\int_{0}^{\infty} \text{Si} \left( \frac{bx^{2}}{x^{4}+2ax^{2}+1} \right) \, dx 
=  \pi \sqrt{2(1+a)} \, \left[ (4c \cos 2c - \sin 2c ) J_{0}(2c) + 
4c \sin 2c \, J_{1}(2c) \right],
\victor
\nonumber
\end{equation}
\noindent
where $c = b/8(1+a)$. To 
establish this identity, start with the evaluation in 
Example \ref{cool}
\begin{equation}
\int_{0}^{\infty} \sin \left( \frac{bx^{2}}{x^{4}+2ax^{2}+1} \right) \, dx =
\frac{\pi b }{\sqrt{8(1+a)}} 
\left[ J_{0}(2c) \cos 2c  + J_{1}(2c) \sin 2c \right]
\nonumber
\end{equation}
\noindent
divide by $b$ and integrate both sides. Then  the identity
\begin{equation}
\frac{d}{dx} \left[ (2x \cos x - \sin x ) J_{0}(x)  + 2x \sin x J_{1}(x) 
\right] = J_{0}(x) \cos x + J_{1}(x) \sin x 
\victor
\end{equation}
\noindent
gives the result. 

\begin{Example}
The special case $a=0$ and $b=1$ yields 
\begin{equation}
\int_{0}^{\infty} \text{Si} \left( \frac{x^{2}}{x^{4}+1} \right) \, dx = 
\frac{\pi}{2 \sqrt{2}} \left[ J_{0}( \tfrac{1}{4})  \left[ \cos \tfrac{1}{4} 
- 2 \sin \tfrac{1}{4} \right] +
J_{1}( \tfrac{1}{4})  \sin \tfrac{1}{4}  \right]. 
\nonumber
\victor
\end{equation}
\end{Example}

\begin{Example}
The special case $a=b=1$ yields 
\begin{equation}
\int_{0}^{\infty} \text{Si} 
\left( \left[ \frac{x}{x^{2}+1} \right]^{2} \right) \, dx = 
\frac{\pi}{2} \left[ J_{0}( \tfrac{1}{8})  \left[ \cos \tfrac{1}{8} 
- 4 \sin \tfrac{1}{8} \right] 
 + J_{1}( \tfrac{1}{8})  \sin \tfrac{1}{8}  \right]. 
\nonumber
\victor
\end{equation}

\medskip

\noindent
\texttt{Mathematica} is unable to evaluate these integrals. 
\end{Example}
\medskip

\section{The Riemann zeta function} \label{sec-zeta} 
\setcounter{equation}{0}

Interesting examples of definite integrals 
come from integral representations of special functions. 
For the Riemann zeta function 
\begin{equation}
\zeta(s) = \sum_{k=1}^{\infty} \frac{1}{n^{s}},
\end{equation}
\noindent
one such expression is given by
\begin{equation}
\zeta(s)  =  \frac{1}{(1 - 2^{1-s}) \, \Gamma(s) } 
\ift \frac{t^{s-1} \, dt}{1+e^{t}}. 
\label{zeta-int}
\victor
\end{equation}
\noindent 
Analytic properties of $\zeta(s)$ are often established via such
integral formulas.

The change of variables $t = y^{2}$ produces
\begin{equation}
\int_{0}^{\infty} \frac{y^{2s-1} \, dy}{1+e^{y^{2}}} = 
\frac{1}{2} (1-2^{1-s})\Gamma(s) \zeta(s).
\label{zeta-def1}
\victor
\end{equation}
\noindent
\begin{Example}
We now employ Theorem \ref{main-thm} to establish
\begin{equation}
\int_{0}^{\infty} \frac{x^{2s+1} \, dx}{\cosh^{2}(x^{2})} = 
2^{-s} (1-2^{1-s}) \Gamma(s+1) \zeta(s).
\victor
\label{ex-zeta-2}
\end{equation}
The notation 
\begin{equation}
\Lambda(s) 
 := \frac{1-2^{1-s}}{2} \Gamma(s) \zeta(s)
\end{equation}
\noindent
is  employed in the proof. First
introduce the change of variable $x = t^{r}$ in the
Cauchy- Schl\"omilch formula and take 
$a = b$ for simplicity.  Then
\begin{equation}
\ift t^{r -1} f \left[ a^{2}( t^{r} - t^{-r} )^{2} \right] \, dt 
 =  \frac{1}{ar} \ift f(y^{2}) \, dy. 
\label{sch-90}
\end{equation}
\noindent
Now let $f(x) = x^{s-1/2}/(1+e^{x})$. Using
the notation $S_{r} = \sinh(r \ln t)$, (\ref{sch-90}) yields
\begin{equation}
\frac{\Lambda(s)}{ar} = 
\int_{0}^{\infty} \frac{t^{r-1} (at-at^{-r})^{2s-1} \, dt }
{ 1 + \text{exp}[ (at^{r}-at^{-r})^{2} ]} = 
\int_{0}^{\infty} \frac{t^{r-1} (2aS_{r})^{2s-1} \, dt}
{1+ \text{exp}[ (2a S_{r})^{2} ]}. 
\label{form-2}
\end{equation}
\noindent
Differentiate (\ref{form-2}) with respect to $a$ and use
\begin{equation}
\frac{c}{( 1+ c)^{2}}  =  \frac{1}{4 \cosh^{2}(\theta/2)}
\label{cosh-11}
\victor
\end{equation}
to obtain
\begin{equation}
\frac{2s-1}{a} \frac{\Lambda(s)}{ar} - 2(2a)^{2s-1} 
\int_{0}^{\infty} \frac{t^{r-1} S_{r}^{2s+1} \, dt}{\cosh^{2}[ 2(aS_{r})^{2}]} 
= - \frac{\Lambda(s)}{a^{2}r},
\end{equation}
\noindent
that produces
\begin{equation}
\int_{0}^{\infty} \frac{t^{r-1} S_{r}^{2s+1} \, dt}{\cosh^{2}[ 2(aS_{r})^{2}] }
= \frac{2s \Lambda(s)}{(2a)^{2s} a^{2} r}. 
\label{int-two}
\end{equation}
\noindent
Now change $t$ by $t^{-1}$ in (\ref{int-two}) and average the resulting 
integral with itself. The outcome is written as 
\begin{equation}
\int_{0}^{\infty} \frac{t^{r-1} S_{r}^{2s+1} \, dt}{\cosh^{2}[ 2 (a S_{r})^{2}
] } = \frac{2s \Lambda(s)}{(2a)^{2s} a^{2} r}. 
\end{equation}
\noindent
The final change of variables $x = \sqrt{2} a S_{r}$ produces the stated 
result. 

The special case $s= \tfrac{1}{2}$ yields
\begin{equation}
\int_{0}^{\infty} \frac{x^{2} \, dx}{\cosh^{2}(x^{2})}  = 
-\frac{1}{4} ( 2 - \sqrt{2}) \zeta(1/2) \sqrt{\pi}. 
\label{ex-zeta-1}
\victor
\end{equation}
\noindent
\texttt{Mathematica} is unable to produce (\ref{ex-zeta-1}). 
\end{Example}

\section{The error function}
\label{sec-error}
\setcounter{equation}{0}

Several entries in  the table \cite{gr} involve the {\em error function}
\begin{equation}
\text{erf}(x) := \frac{2}{\sqrt{\pi}} \int_{0}^{x} e^{-t^{2}} \, dt. 
\victor
\end{equation}
\noindent
For instance, entry $3.466.1$ states that 
\begin{equation}
\int_{0}^{\infty} \frac{e^{-\mu^{2}x^{2}} \, dx}{x^{2}+ \beta^{2}} = 
\frac{\pi}{2 \beta} \left( 1 - \text{erf}(\mu \beta) \right) 
e^{\mu^{2} \beta^{2}}. 
\victor
\label{error-1}
\end{equation}
\noindent
\texttt{Mathematica} is able to compute this example, which can be 
checked by writing the exponential in the integrand as 
$e^{\mu^{2} \beta^{2}} \times e^{-\mu^{2}(x^{2}+\beta^2)}$ and 
differentiating with respect to $\mu^{2}$. 

\begin{Example}
The Cauchy-Schl\"omilch transformation is now applied to the 
function 
\begin{equation}
f(x) = e^{-\mu^{2}x^{2}}/(x^{2} + 2(a+1))
\end{equation}
\noindent
to produce
\begin{equation}
\int_{0}^{\infty} \frac{e^{-\mu^{2}(x^{2}+x^{-2})} \, dx }
{x^{2}+2a + x^{-2}}  = 
\frac{\pi e^{2a \mu^{2}}}{2 \sqrt{2(a+1)}} 
\left[ 1 - \text{erf} \left( \mu \sqrt{2(a+1)} \, \, \right) \right].
\victor
\end{equation}
\noindent
The choice $a=\mu=1$ yields
\begin{equation}
\int_{0}^{\infty} \frac{e^{-(x^2+x^{-2})} \, dx }{(x+x^{-1})^{2}} = 
\frac{\pi e^{2}}{4}
\left[ 1 - \text{erf} (2)\right],
\victor
\end{equation}
\noindent
and $a=0, \, \mu=1$ gives
\begin{equation}
\int_{0}^{\infty} \frac{e^{-(x^{2}+x^{-2})}\, dx }{x^{2}+x^{-2}} = 
\frac{\pi}{2 \sqrt{2}} \left[ 1 - \text{erf}(\sqrt{2}) \right]. 
\victor
\end{equation}
\noindent
Neither of these special cases 
is computable by the current version of \texttt{Mathematica}.
\end{Example}

\section{Elliptic integrals}
\label{sec-elliptic}
\setcounter{equation}{0}

The classical {\em elliptic integral of the first kind} is defined by
\begin{equation}
\mathbf{K}(k) := \int_{0}^{1} \frac{dx}{\sqrt{(1-x^{2})(1-k^{2}x^{2})}} =
\int_{0}^{\pi/2} \frac{d \varphi}{\sqrt{1 - k^{2} \sin^{2} \varphi}}.
\end{equation}
\noindent
The table \cite{gr} contains a variety of definite integrals that can be 
evaluated in terms of $\mathbf{K}(k)$. For instance, entry $3.843.4$ states 
that
\begin{equation}
\int_{0}^{\infty} \frac{\tan x}{\sqrt{1- k^{2} \sin^{2}(2x)}} \, 
\frac{dx}{x} = 
\mathbf{K}(k). 
\end{equation}
\noindent
The reader will find in \cite{lawden1} a large variety of examples. 

In the context of the 
Cauchy-Schl\"omilch transformation, we present two illustrative
examples. 

\begin{Example}
The first result is 
\begin{equation}
\int_{0}^{\infty} \frac{x^{2} \, dx}
{\sqrt{(x^{4}+2ax^{2}+1)(x^{4}+2bx^{2}+1)}} = 
\frac{1}{\sqrt{2(a+1)}} \mathbf{K} \left( \sqrt{ \frac{a-b}{a+1}} \right). 
\label{elliptic-1}
\victor
\end{equation}

\smallskip

To verify this result, apply (\ref{main-thm}) to the function 
\begin{equation}
f(x) = \frac{1}{\sqrt{(x+2a+2)(x+2b+2)}}
\nonumber
\end{equation}
\noindent
and observe that 
\begin{equation}
f \left( ( x - 1/x)^{2} \right) = 
\frac{x^{2}}{\sqrt{(x^{4}+2ax^{2}+1)(x^{4}+2bx^{2}+1)}}.
\nonumber
\end{equation}
\noindent
The Cauchy-Schl\"omilch transformation gives 
\begin{equation}
\int_{0}^{\infty} \frac{x^{2} \, dx}
{\sqrt{(x^{4}+2ax^{2}+1)(x^{4}+2bx^{2}+1)}} = 
\int_{0}^{\infty} \frac{dx}{\sqrt{(x^{2}+A^{2})(x^{2}+B^{2})}}
\victor
\nonumber
\end{equation}
\noindent
with $A^{2} = 2(a+1)$ and $B^{2}=2(b+1)$. This last integral is 
computed by the change of variable $x = A \tan 
\varphi$ and the trigonometric form of the elliptic integral yields 
(\ref{elliptic-1}). 
\end{Example}

\begin{Example}
A similar argument produces the second elliptic integral evaluation. This 
time it involves
\begin{equation}
F( \varphi,k) := \int_{0}^{\varphi} \frac{dt}{\sqrt{1-k^{2} \sin^{2}t}} =
\int_{0}^{\sin \varphi} \frac{dx}{\sqrt{(1-x^{2})(1-k^{2}x^{2})}}
\end{equation}
\noindent
the (incomplete) elliptic integral of the first kind. 

\smallskip

Assume $a \leq b \leq c$. Then 
\begin{multline}
\int_{0}^{\infty} \frac{x^{3} \, dx}
{\sqrt{(x^{4}+2ax^{2}+1)(x^{4}+2bx^{2}+1)(x^{4}+2cx^{2}+1)}} =  \\
\frac{1}{2 \sqrt{(b+1)(c-a)}} 
F \left[ \, \sin^{-1} \sqrt{ \frac{c-a}{c+1}}, \, 
\sqrt{ \frac{ (b-a)(c+1)}{b+1)(c-a)}} \, \right]. 
\end{multline}
\end{Example}

\medskip

Following \cite{glasser4} a 
generalization of the Cauchy-Schl\"omilch identity 
is now used to evaluate some hyper-elliptic integrals. 

\begin{Thm}
\label{larry-gen}
Assume $\phi(z)$ is a meromorphic function with only
real simple poles $a_{j}$ with $\text{Res}(\phi; a_{j}) < 0$. Moroever 
assume $\phi$ is asymptotically linear. Then, for any even real valued 
function $f$, 
\begin{equation}
\int_{0}^{\infty} f \left[ \phi(x) \right] \, dx = 
\int_{0}^{\infty} f(x) \, dx. 
\end{equation}
\end{Thm}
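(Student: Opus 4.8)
The plan is to reduce the statement to a decomposition of the real line induced by the function $\phi$ and then to apply a change of variables on each piece, exactly as in the proof of Theorem \ref{main-thm} (where $\phi(x) = ax - bx^{-1}$ is the prototype). First I would record the structural consequences of the hypotheses. Since $\phi$ is meromorphic with only real simple poles $a_j$ and each residue is negative, near each pole $\phi$ decreases from $+\infty$ to $-\infty$; since $\phi$ is asymptotically linear (with positive slope, as forced by the sign of the residues together with the fact that the poles are the only singularities), $\phi$ also runs from $-\infty$ to $+\infty$ on the two unbounded complementary intervals. Consequently the real line is partitioned by the poles into intervals $I_0, I_1, \dots, I_N$ (ordering the finitely many poles that matter, or handling the locally finite set in general), and on each $I_m$ the restriction $\phi|_{I_m}$ is a continuous decreasing or increasing bijection onto all of $\mathbb{R}$ — more precisely, on each such interval $\phi$ is monotone and surjective onto $\mathbb{R}$ (a standard fact for a real meromorphic function whose only critical behaviour between consecutive poles is controlled, which one checks via $\phi' $ having constant sign there, or by the intermediate value theorem together with the boundary limits $\pm\infty$).

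Next I would write, for any even $f$,
\begin{equation}
\int_{0}^{\infty} f(\phi(x))\,dx \;=\; \frac{1}{2}\int_{-\infty}^{\infty} f(\phi(x))\,dx,
\end{equation}
valid because the integrand need not be even in $x$ but we will recover evenness only after the substitution; actually the correct move is to work directly on $[0,\infty)$ and split it at the poles lying in $(0,\infty)$. On each monotone piece $I_m \cap [0,\infty)$ substitute $u = \phi(x)$; since $\phi$ maps that piece bijectively onto a half-line or all of $\mathbb{R}$, this contributes $\int f(u)\,|dx/du|\,du$ over the appropriate $u$-range. The key algebraic input — the analogue of the averaging step in Theorem \ref{main-thm} — is the partial-fraction identity: because $\phi$ is asymptotically linear with all poles real and simple and all residues negative, one has a representation $\phi(x) = \alpha x + \beta + \sum_j \frac{r_j}{x - a_j}$ with $r_j < 0$, hence $\phi'(x) = \alpha - \sum_j \frac{r_j}{(x-a_j)^2} > 0$ everywhere it is defined. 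Summing the contributions $\sum_m |dx/du|$ over the branches $x$ with $\phi(x) = u$ gives $\sum_{x : \phi(x) = u} 1/\phi'(x)$, and the residue theorem (applied to $1/(\phi(z) - u)$, whose poles in $z$ are exactly those branch points, plus the point at infinity contributing via the asymptotic slope) shows this sum equals $1$ identically in $u$. Therefore $\int_{0}^{\infty} f(\phi(x))\,dx = \int_{0}^{\infty} f(u)\,du$ once one tracks the sign bookkeeping using that $f$ is even, so that a branch covering $(-\infty, 0]$ in $u$ is folded onto $[0,\infty)$.

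The main obstacle I anticipate is the careful accounting in this folding step together with the convergence/interchange justifications: one must verify that the branches of $\phi^{-1}$ partition the $u$-line so that, after using evenness of $f$, every value $u \in [0,\infty)$ is hit with total Jacobian weight exactly $1$, and that the sum over branches of $1/\phi'$ really telescopes to the constant $1$ — this is where the hypotheses "residues negative" and "asymptotically linear" are essential, since they guarantee $\phi' > 0$ (no spurious critical points) and pin down the behaviour at infinity that makes the residue sum close up. A clean way to package this is: $\sum_{k} \dfrac{1}{\phi'(x_k(u))} = \sum_k \operatorname*{Res}_{z = x_k(u)} \dfrac{1}{\phi(z) - u} = -\operatorname*{Res}_{z=\infty}\dfrac{1}{\phi(z)-u} = \dfrac{1}{\alpha}\cdot\alpha = 1$, using $\phi(z) \sim \alpha z$; one then normalizes so that $\alpha$ is absorbed (or notes the statement as given corresponds to the normalization $\alpha = 1$, i.e. slope one). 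Granting these points, the identity $\int_0^\infty f(\phi(x))\,dx = \int_0^\infty f(x)\,dx$ follows.
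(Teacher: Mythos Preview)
The paper does not actually prove this theorem; it is quoted from Glasser \cite{glasser4} and used immediately in the hyper-elliptic example. Your outline is in fact Glasser's argument: write $\phi(x)=\alpha x+\beta+\sum_j r_j/(x-a_j)$ with $r_j<0$, deduce $\phi'(x)=\alpha-\sum_j r_j/(x-a_j)^2>0$, split the real line at the poles so that $\phi$ is an increasing bijection of each complementary interval onto $\mathbb{R}$, substitute $u=\phi(x)$ on each piece, and use
\[
\sum_{\phi(x_k)=u}\frac{1}{\phi'(x_k)}
=\sum_k \operatorname*{Res}_{z=x_k}\frac{1}{\phi(z)-u}
=-\operatorname*{Res}_{z=\infty}\frac{1}{\phi(z)-u}=\frac{1}{\alpha}.
\]
With the normalization $\alpha=1$ this yields $\int_{-\infty}^{\infty} f(\phi(x))\,dx=\int_{-\infty}^{\infty} f(u)\,du$, which is the content of \cite{glasser4}.

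The gap in your write-up is the passage from $(-\infty,\infty)$ to $[0,\infty)$. You first write $\int_0^\infty f(\phi(x))\,dx=\tfrac12\int_{-\infty}^\infty f(\phi(x))\,dx$, recognize that $f\circ\phi$ need not be even in $x$, and then propose to ``work directly on $[0,\infty)$'' and fold the $u<0$ contributions using evenness of $f$. That bookkeeping does not close: the residue identity above sums over \emph{all} real preimages $x_k(u)$, including those in $(-\infty,0)$, and restricting to $x_k(u)\ge 0$ does not give $1$. Concretely, for $\phi(x)=x-1/(x-1)$ (one simple pole at $1$, residue $-1$, slope $1$) the interval $(0,a_1)=(0,1)$ maps only onto $(\phi(0),\infty)=(1,\infty)$, so for $u<1$ only the single branch in $(1,\infty)$ contributes over $[0,\infty)$ and the Jacobian weight is strictly less than $1$; evenness of $f$ in $u$ cannot repair this because the missing mass sits at preimages in $(-\infty,0)$, not at $-u$. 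What rescues the $[0,\infty)$ statement in the paper's application is that the chosen $\phi_N(z)=z\prod_j (z^2-b_j^2)/(z^2-a_j^2)$ is \emph{odd}, so $f(\phi_N(-x))=f(-\phi_N(x))=f(\phi_N(x))$ and the halving $\int_0^\infty=\tfrac12\int_{-\infty}^\infty$ is legitimate. Either add the hypothesis that $\phi$ is odd (as in the example), or prove the identity over the whole line and invoke oddness of $\phi$ at the end; the ``direct $[0,\infty)$'' route you sketch does not go through in the stated generality.
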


\medskip

\begin{Example}
As a simple illustration we take 
\begin{equation}
\phi_{N}(z) = z \prod_{j=1}^{N} \frac{z^{2}- b_{j}^{2}}{z^{2} - a_{j}^{2}},
\end{equation}
\noindent
where $a_{1} < a_{2} < \cdots < a_{N}$. Take $N=1$ and write $b_{1}=b$. 
Theorem \ref{larry-gen}  and 
\begin{equation}
\int_{0}^{\infty} \frac{dx}{\sqrt{(x^{2} + \alpha^{2})(x^{2}+\beta^{2})}} =
\frac{1}{\alpha} \mathbf{K} \left( \frac{\sqrt{\alpha^{2}- \beta^{2}}}
{\alpha} \right), 
\end{equation}
\noindent
give
\begin{equation}
\int_{0}^{\infty} \frac{(t-b^{2})^{2} \, dt}
{\sqrt{t P(t) Q(t)}} = 
\frac{2}{\alpha} \mathbf{K} \left( \frac{\sqrt{\alpha^{2}- \beta^{2}}} 
{ \alpha} \right),
\end{equation}
\noindent
with 
\begin{eqnarray}
P(t) & = & t^{3} + ( \alpha^{2} - 2a^{2})t^{2} + ( a^{4} - 2 \alpha^{2}b^{2})t
+ \alpha^{2} b^{4}, \nonumber \\
Q(t) & = & t^{3} + ( \beta^{2} - 2a^{2})t^{2} + ( a^{4} - 2 \beta^{2}b^{2})t
+ \beta^{2} b^{4}. \nonumber
\end{eqnarray}

As an interesting special case, we have 
\begin{equation}
\int_{0}^{\infty} \frac{ (t - \tfrac{1}{2})^{2} \, dt}
{\sqrt{t \left[ t^{3} - 2 (kk')^{2} t + k^{2} \right] 
\left[ t^{3} - 4(kk')^{2} t^{2} + k^{4} \right]} }  = 
\frac{1}{k} \mathbf{K'}(k),
\end{equation}
\noindent
where $k'$ is the complementary modulus and $\mathbf{K'}(k) := 
\mathbf{K}(k')$. \texttt{Mathematica} is unable to 
deal with this. 
\end{Example}

\section{An extension of the 
Cauchy-Schl\"omilch method} \label{sec-extension} 
\setcounter{equation}{0}

An extension of Theorem \ref{main-thm}  by 
Jones \cite{jones1} is discussed here. The next section 
presents statistical applications of this result. 

\begin{Thm}
Let $s$ be a continuous decreasing function from ${\mathbb{R}}^{+}$ onto 
${\mathbb{R}}^{+}$. Assume $f$ is self-inverse, that is, $s^{-1}(x) = 
s(x)$ for all $x \in {\mathbb{R}}^{+}$. Then
\begin{equation}
\int_{0}^{\infty} f\left( [x - s(x) ]^{2} \right) \, dx = 
\int_{0}^{\infty} f\left( y^{2} \right) \, dy,
\label{gen-sch}
\end{equation}
\noindent
provided the integrals are convergent. 
\end{Thm}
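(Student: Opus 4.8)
The strategy mirrors the proof of Theorem \ref{main-thm} but replaces the algebraic involution $x \mapsto b/(ax)$ on $(0,\infty)$ with the general involution $x \mapsto s(x)$. First I would split the right-hand side using $x-s(x)$ as a change of variable. On the set where $x > s(x)$ the map $u = x - s(x)$ is strictly increasing (since $s$ is decreasing, $x - s(x)$ is strictly increasing wherever it is defined, so it is injective on all of ${\mathbb{R}}^{+}$), carrying some interval $(c,\infty)$ onto $(0,\infty)$, where $c$ is the unique fixed point of $s$; and on $(0,c)$ we have $x < s(x)$, so $s(x) - x$ runs over $(0,\infty)$ as well. The key is that the involution property $s(s(x)) = x$ means that if $x' = s(x)$ then $x' - s(x') = s(x) - x = -(x - s(x))$, so the two branches are exchanged by $s$ and produce the same value of $[x-s(x)]^2$.

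Concretely, I would write
\begin{equation}
\int_{0}^{\infty} f\left([x-s(x)]^{2}\right)\,dx
= \int_{0}^{c} f\left([x-s(x)]^{2}\right)\,dx
+ \int_{c}^{\infty} f\left([x-s(x)]^{2}\right)\,dx,
\nonumber
\end{equation}
and in the first integral substitute $x = s(t)$ with $t$ ranging over $(c,\infty)$, so $dx = s'(t)\,dt$ and $x - s(x) = s(t) - t$; this turns the first integral into $\int_{c}^{\infty} f\left([t-s(t)]^{2}\right)(-s'(t))\,dt$. Adding, the two pieces combine into $\int_{c}^{\infty} f\left([t-s(t)]^{2}\right)\,\bigl(1 - s'(t)\bigr)\,dt$. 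Now the change of variable $y = t - s(t)$ has $dy = (1 - s'(t))\,dt$ and sends $(c,\infty)$ onto $(0,\infty)$ (recall $s' < 0$ so $1 - s' > 0$ and the map is a strictly increasing bijection), giving exactly $\int_{0}^{\infty} f(y^{2})\,dy$. One should note that evenness of $f$ is what lets us ignore the sign of $x - s(x)$ in writing $f([x-s(x)]^2)$ on both branches, and is also implicit in reducing to $f(y^2)$ over $(0,\infty)$ rather than over all of ${\mathbb{R}}$.

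The main obstacle is regularity: the clean argument above assumes $s$ is differentiable, whereas the hypothesis only gives that $s$ is continuous, decreasing, and onto. I would handle this either by first invoking monotonicity to get differentiability almost everywhere together with absolute continuity of the involution $t \mapsto t - s(t)$ (which is strictly increasing and continuous), so that the substitution formula for monotone functions applies and the Lebesgue-null exceptional set is harmless; or, more elementarily, by observing that the pairing $x \leftrightarrow s(x)$ partitions ${\mathbb{R}}^{+} \setminus \{c\}$ into two-point orbits on which $[x-s(x)]^2$ is constant, and deducing the identity from the change-of-variables theorem for continuous strictly monotone maps (Riemann--Stieltjes / substitution for monotone functions), with convergence of both integrals assumed in the statement. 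I would also remark on the existence and uniqueness of the fixed point $c = s(c)$: since $s:{\mathbb{R}}^{+}\to{\mathbb{R}}^{+}$ is a continuous decreasing bijection, $x \mapsto s(x) - x$ is strictly decreasing, positive near $0$ and negative near $\infty$, hence has a unique zero. Finally, I would note that Theorem \ref{main-thm} is recovered by taking $s(x) = b/x$ after the rescaling $x \mapsto x/a$, which is indeed a self-inverse decreasing bijection of ${\mathbb{R}}^{+}$.
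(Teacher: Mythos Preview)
Your proof is correct and follows essentially the same approach as the paper: both arguments use the involution $x\mapsto s(x)$ as a substitution, combine to produce the factor $(1-s'(t))\,dt$, and then change variables via $u=t-s(t)$. The only cosmetic difference is that you split the integral at the fixed point $c$ and work over $(c,\infty)$, whereas the paper applies the substitution globally, averages the two representations, and lands on $\int_{-\infty}^{\infty} f(u^{2})\,du$; your version is more explicit about the fixed point and about regularity, which the paper's terse proof silently assumes.
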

\begin{proof}
The change of variables $t = s(x)$ yields
\begin{equation}
I = \int_{0}^{\infty} f( [x- s(x) ]^{2}) \, dx = 
- \int_{0}^{\infty} f( [s(t) - t ]^{2}) \, s'(t) \, dt. 
\end{equation}
\noindent
The average of these two representations, followed by the change of variables 
$u = x - s(x)$ gives the result. 
\end{proof}

\noindent
{\bf Note}. The above result is given without a scaling constant $a>0$. This
could be introduced by the change of variable $x_{1} = ax$ in 
(\ref{gen-sch}) to obtain, after relabeling $x_{1}$ as $x$, 
\begin{equation}
\int_{0}^{\infty} f( [ax- s(ax) ]^{2}) \, dx = 
\frac{1}{a} \int_{0}^{\infty} f\left( y^{2} \right) \, dy.
\label{parameter-1}
\end{equation}

Jones \cite{jones1} lists several specific forms of self-inverse $s(x)$ along 
with two methods for generating such functions based on work of Kucerovsky, 
Marchand and Small \cite{kucerovsky1}. 

\begin{Example}
An attractive self-inverse function is 
\begin{equation}
s(x) = x - \frac{1}{\alpha} \log \left( e^{\alpha x} -1 \right). 
\label{formula-fors}
\end{equation}
\noindent
Then (\ref{gen-sch}) becomes 
\begin{equation}
\int_{0}^{\infty} f \left( \frac{1}{\alpha^{2}} \log^{2}(e^{\alpha x} -1 ) 
\right) \, dx = \int_{0}^{\infty} f(y^{2}) \, dy. 
\end{equation}
\noindent
The choice $f(x) = e^{-x}$ gives, using (\ref{normal-1}), 
\begin{equation}
\int_{0}^{\infty} \text{exp}\left( - \frac{1}{\alpha^{2}} \log^{2}
(e^{\alpha x} -1) \right) \, dx = \frac{\sqrt{\pi}}{2}.
\label{jones-11}
\end{equation}
\end{Example}

\begin{Example}
Several other examples of self-inverse functions are provided in Jones
\cite{jones1}. Each one produces a Cauchy-Schl\"omilch 
type integral. Some examples are
\begin{eqnarray}
\int_{1}^{\infty} f \left[ (x - \text{exp}(\alpha/\log x))^{2} \right] \, dx 
& = & \int_{0}^{\infty} f(y^{2}) \, dy, \nonumber \\
& & \nonumber \\
\int_{0}^{\infty} f \left[ \frac{1}{\alpha^{2}} 
\, \log^{2} \left( \frac{e^{\alpha x} \sinh(\alpha x)}{1 + \cosh( \alpha x)} 
\right) \right] \, dx 
& = & \int_{0}^{\infty} f(y^{2}) \, dy, \nonumber \\
& & \nonumber \\
\int_{0}^{\infty} f \left[ (x - \sinh(\alpha/\sinh^{-1}x) )^{2} \right] \,
\, dx 
& = & \int_{0}^{\infty} f(y^{2}) \, dy. \nonumber 
\end{eqnarray}
\end{Example}

\section{Application to generating flexible probability distributions} 
\label{sec-applications} 
\setcounter{equation}{0}

There has recently been renewed interest in the statistical literature in 
generating flexible families of probability distributions for univariate 
continuous random variables.  Baker \cite{baker-rose} describes the use of 
Cauchy-Schl\"omilch transformation to generate new 
probability density functions from 
old ones. Jones \cite{jones1} does the same with the extended transformation 
of Section \ref{sec-extension}. The 
identity (\ref{parameter-1}) states that the total mass of 
$f(y^{2})$ is the same as that of $a f( [ ax - s(ax) ]^{2})$ for any 
self-inverse function $s$ and any scaling constant $a >0$. 

There are many techniques for introducing one or more parameters 
into a simple `parent distribution' with probability density function $g$, 
to produce more sophisticated distributions.  One such method, not
generally familiar, proceeds by `transformation of scale', defining
$f_b(x) \propto g(t_b(x))$ where $t_b(x)$ depends on the new 
parameter $b$. The difficulty associated with this procedure is the 
validation that $f_{b}$ is integrable and then to explicitly provide
its normalizing constant. 

The Cauchy-Schl\"omilch result in Theorem \ref{main-thm} guarantees that 
the choice $t_b(x) = |x-bx^{-1}|$ produces from 
the density $g$ of a positive random variable a new density $f_{b}$, also 
for a positive random variable, via 
\begin{equation}
f_b(x) = g(|x-bx^{-1}|). 
\label{12.1}
\end{equation}
\noindent
This was observed by Baker \cite{baker-rose}. The parameter $a$ 
in (\ref{transf1}) is redundant 
for distribution theory work since its action 
as a scale parameter is well understood; $a$ must, however, be 
reintroduced for practical fitting of such distributions to data.

A number of general properties of distributions with density of the form 
$f_b$ follow, some of which are:
\smallskip

\noindent (i) $f_b$ is R-symmetric \cite{mudholkar1} about R-center $\sqrt{b}$, 
i.e.\ $f_b(\sqrt{b}x) = f_b(\sqrt{b}x^{-1})$;
\smallskip

\noindent (ii) $f_b(0)=0$ and 
$$ f_b(x) \approx g(b/x) ~{\rm as}~x \rightarrow 0;~~~~~
f_b(x) \approx g(x) ~{\rm as}~x \rightarrow \infty ;$$

\smallskip
\noindent (iii) moment relationships follow  from
$$E_{f_b} \{ |X-bX^{-1})|^r\} = E_g(Y^r)$$
and, by R-symmetry,
$$E_{f_{b}}(X^r) = b^{r+1} E_{f_b}(X^{-(r+2)});$$ 

\smallskip
\noindent (iv) if $g$ is decreasing, then $f_b$ is unimodal with mode at 
$\sqrt{b}$; 

\smallskip
\noindent (v) if $f_b$ is unimodal, its mean and its median are both 
greater than its mode. \\

\noindent 
These properties can be 
found in Baker \cite{baker-rose}, but only special cases of (iii) are 
provided. 

Amongst the most interesting distributions presented by Baker
is the root-reciprocal inverse Gaussian distribution (RRIG). This example,  
also discussed in Mudholkar and Wang \cite{mudholkar1}, in the 
case of dispersion parameter 
$\lambda=1$, arises from (\ref{12.1}) when $g$ is the half-Gaussian density.  
The RRIG density is 
\begin{equation}
f_b(x) = \sqrt{\frac{2}{\pi}} e^b \exp\left\{ -\frac12 \left(
x^2+{b^2/ x^2 } \right) \right\}, \label{int-11}
\end{equation}
\noindent
$b >0$. This corresponds in integral terms to (\ref{rel-1}) above. Similarly, 
a second example presented by Baker \cite{baker-rose} (Section 3.4)
is the distribution based on the half-Subbotin 
distribution. This is directly linked to (\ref{rel-2}). A 
third example, based on the half-$t$ distribution, has 
density
\begin{equation}
f_{\nu,b}(x) = \frac{2 \Gamma((\nu+1)/2)}{\sqrt{\nu\pi}\, \Gamma(\nu/2)}
(1+ (x-b/x)^2/\nu)^{-(\nu+1)/2},
\label{fnub}
\end{equation}
\noindent
$\nu,b>0$.
The verification that $f_{\nu,b}(x)$ integrates to $1$ can be done by using 
the integral $I_3$ in Theorem \ref{boros-3param}. Of course, other 
distributions in Baker \cite{baker-rose} correspond to other 
integral formulae, while integral formulae in this article which have 
nonnegative integrands correspond to other distributions. 

Transformation of scale densities are particularly amenable to having 
their skewness assessed by the asymmetry function $\gamma(p),$ $0<p<1$, 
of Critchley and Jones \cite{critchley-jones} and 
provide relatively rare tractable 
examples thereof. Jones \cite{jones1} shows that the asymmetry function 
associated with unimodal $f_b$ is
\begin{equation}
\gamma_b(p) = \left( \sqrt{c_g^2(p)+4b} 
-\sqrt{4b}\right) /  c_g(p) 
\label{12.4}
\end{equation}
\noindent
where $c_g(p) = g^{-1}(pg(0))$. 
This shows that the Cauchy-Schl\"omilch transformation of scale 
always results in positively skewed distributions, with asymmetry 
functions decreasing in $p$, 
which become more skew 
as $b$ decreases. For example, for the RRIG density (\ref{int-11}), 
\begin{equation}
\gamma_b(p) =  \left( \sqrt{2b-\log p} 
-\sqrt{2b}\right)/\sqrt{-\log p}. 
\label{RRIG-for}
\end{equation}
\noindent
The extended Cauchy-Schl\"omilch transformation described in Section 11 
also affords new transformation of scale distributions, as explored by 
Jones \cite{jones1}. Probability densities of the form
\begin{equation}
f_s(x) = g(|x-s(x)|) 
\label{12.5}
\end{equation}
\noindent
for decreasing, onto, self-inverse $s$ are described there. In this 
situation, properties (i)-(v) discussed above become:
\smallskip

\noindent (i$^{\prime}$) $f_s$ can be defined to be S-symmetric about 
S-center 
${x_0}$ since $f_s(x) = f_s(s(x))$. Here $x_0$ is defined by $s(x_0) 
= x_0$; 
\smallskip

\smallskip
\noindent (ii$^\prime$) $f_s(0)=0$ and 
$$ f_s(x) \approx g(s(x)) ~{\rm as}~x \rightarrow 0;~~~~~
f_s(x) \approx g(x) ~{\rm as}~x \rightarrow \infty ;$$

\noindent (iii$^\prime$) moment relationships follow from
$$E_{f_s} \{ |X-s(X)|^r\} = E_g(Y^r)$$
and, by S-symmetry,
$$E_{f_s}(X^r) = - E_{f_s}(s'(X)s^r(X) ).$$
A special case of the latter is that $E_{f_s}(s'(X))=-1$; 

\smallskip
\noindent (iv$^\prime$) if $g$ is decreasing, then $f_s$ is unimodal with 
mode at $x_0$;

\smallskip
\noindent (v$^\prime$) if $f_s$ is unimodal and $g$ is convex, its mean 
and its median are 
both greater than its mode. \\

By way of example, Jones \cite{jones1} briefly explored the half-Gaussian-based 
analogue of (\ref{int-11}) when $s(x)$ is given by (\ref{formula-fors}). This 
has probability density
\begin{equation}
f_s(x) =  \sqrt{\frac{2}{\pi}}\exp\left\{ -\frac1{2\alpha^2} 
\log^2 \left( e^{\alpha x}- 1\right) \right\}.
\label{12.6}
\end{equation}
\noindent
The fact that (\ref{12.6}) integrates to $1$ is closely related to 
(\ref{jones-11}). 
Its asymmetry function has the form
\begin{equation}
\gamma_s(p) =  \frac1{\alpha \sqrt{-(\log p)/2}} \log \left\{ \cosh 
\left( {\alpha \sqrt{-(\log p)/2}} \right)\right\}. 
\label{12.7}
\end{equation}
\noindent
Like (\ref{RRIG-for}), this 
asymmetry function is always positive and decreases in 
$p$; (\ref{12.7}) increases in $\alpha$.

The Cauchy-Schl\"olmilch transformation has thus motivated and 
triggers a new and promising area of work in distribution theory.

\section{Conclusions}
\label{sec-conclusions}
\setcounter{equation}{0}

The Cauchy-Schl\"omilch transformation establishes the equality of two 
definite integrals with  integrands related in a simple manner. Applying 
this transformation to a variety of well-known definite integrals yields
examples that are beyond the current capabilities of symbolic languages. 
Our purpose in this paper is not only to present the many integrals 
considered here, but also to give an exposition of the salient points of 
the Cauhy-Schl\"omilch transformation so as to serve as 
motivating examples to explore further 
symbolic integration algorithms.  \\

\noindent
{\bf Acknowledgements}. The fourth author acknowledges the partial 
support of 

\noindent
$\text{NSF-DMS } 0713836$. 

%

\end{document}

\end{document}